\newtheorem{theorem}{Theorem}[section]
\newtheorem{proposition}{Proposition}
\newtheorem{lemma}{Lemma}
\newtheorem{corollary}{Corollary}
\DeclareMathOperator{\mesh}{mesh}
\DeclareMathOperator{\stab}{Stab}
\theoremstyle{definition}
\newtheorem{definition}{Definition}
\theoremstyle{remark}
\numberwithin{equation}{section}
\begin{document}
\title{Decomposition complexity growth of finitely generated groups}
\author{Trevor Davila}
\begin{abstract}
Finite decomposition complexity and asymptotic dimension growth are two generalizations of M. Gromov's asymptotic dimension which can be used to prove property A for large classes of finitely generated groups of infinite asymptotic dimension. In this paper, we introduce the notion of decomposition complexity growth, which is a quasi-isometry invariant generalizing both finite decomposition complexity and dimension growth. We show that subexponential decomposition complexity growth implies property A, and is preserved by certain group and metric constructions.
\keywords{geometric group theory \and metric geometry \and asymptotic dimension \and finite decomposition complexity}
\end{abstract}

\maketitle

\section{Introduction}
\label{intro}
Asymptotic dimension was introduced by M. Gromov in \cite{GRO} to classify the large-scale geometry of finitely generated groups. In \cite{Y1} G. Yu proved the Novikov higher signature conjecture for groups with finite asymptotic dimension. In \cite{Y2} Yu introduced property A, a dimension-like property weaker than finite asymptotic dimension, and proved the coarse Baum-Connes conjecture for groups with property A. These results showed the dimension theory approach to coarse geometry to be quite fruitful. Groups with finite asymptotic dimension include finitely generated abelian groups, hyperbolic groups \cite{GRO}, mapping class groups \cite{BBF1}, Coxeter groups \cite{D2}, and groups acting properly and cocompactly on a finite dimensional $CAT(0)$ cube complex \cite{WR1}.

To study the dimension-like properties of metric spaces with infinite asymptotic dimension, several generalizations of asymptotic dimension which imply Yu's property A have been formulated. Finite decomposition complexity was introduced and was shown to imply property A in \cite{GTY1}. Groups with FDC include all groups of finite asymptotic dimension, countable subgroups of $GL(n,R)$ for any commutative ring $R$, and all elementary amenable groups \cite{GTY2}. In \cite{D1}, Dranishnikov introduced asymptotic dimension growth, showed that polynomial dimension growth implies property A, and this was strengthened to subexponential dimension growth by Ozawa. Groups with subexponential dimension growth include groups of finite asymptotic dimensino, wreath products $\mathbb{Z} \wr N$ with $N$ with $N$ virtually nilpotent \cite{D1}, iterated wreath products involving $\mathbb{Z}$, and coarse median groups \cite{ANWZ1}. Dranishnikov and Zarichnyi also introduced a weakening of finite decomposition complexity \cite{DZ1}, and showed that this still implies property A. However, some spaces and groups, most notably Thompson's group $F$, resist classification via these invariants. Also, the relationship between FDC and dimension growth is unclear.

In this paper, we introduce the notion of decomposition complexity growth, which generalizes both finite decomposition complexity and asymptotic dimension growth. We show that decomposition complexity growth is a quasi-isometry invariant. Our goal is to define the weakest possible version of decomposition complexity that still implies property A, which is subexponential decomposition growth. We show that finite decomposition complexity and subexponential asymptotic dimension growth both imply subexponential decomposition growth, that subexponential decomposition growth implies property A, and that decomposition complexity growth is preserved by some group and metric constructions. We also provide an example of a group whose finite decomposition complexity status and dimension growth are unknown, but which has subexponential decomposition growth.
\section{Preliminaries}
\label{prelim}
A quasi-isometric embedding of metric spaces is a map $f:X\to Y$ such that there exist constants $L,C > 0$ such that, for any $x,y \in X$
$$Ld(x,y)-C < d(f(x),f(y)) < Ld(x,y) + C.$$

A quasi-isometry is a quasi-isometric embedding $f:X\to Y$ such that 
there is some $C>0$ such that any $y \in Y$ has $x \in X$ such that $d(y,f(x)) < C$. Equivalently, a quasi-isometry is a quasi-isometric embedding $f:X \to Y$ such that there exists a quasi-isometric embedding $g:Y \to X$ and a constant $C$ such that $d(g \circ f (x), x) \leq C$ for all $x \in X$, and $d(f \circ g (y), y) \leq C$ for all $y \in Y$. Hence to show a property is a quasi-isometry invariant, it is enough to show it is pulled back by quasi-isometric embeddings.

Let $X$ be a metric space. For nonempty $A,B \subset X$, we let $d(A,B) = \inf \{ d(a,b) \, : \, a \in A, \, b \in B \}$.

Let $R > 0$. A family $\mathcal{U}$ of nonempty subsets of $X$ is $R$-disjoint if $d(A,B) > R$ for all $A,B \in \mathcal{U}$ with $A \neq B$.

A family $\mathcal{U}$ of subsets of $X$ is uniformly bounded if $mesh \, \mathcal{U} = \sup \{ diam(U) \, : \, U \in \mathcal{U} \} < \infty$.

\begin{definition}
A metric space $X$ \textit{$(R,n)$-decomposes} over a family of metric spaces $\mathcal{V}$ if there exists a family 
$$\mathcal{U}=\mathcal{U}_1 \cup \mathcal{U}_2 \dots \cup \mathcal{U}_n$$ of subsets of $X$ such that each $\mathcal{U}_i$ is $R$-disjoint, $\mathcal{U} \subset \mathcal{V}$, and
$$X = \bigcup \mathcal{U}$$
i.e. $\mathcal{U}$ is a cover of $X$. We write
$$X \stackrel{R,n}{\longrightarrow} \mathcal{V}.$$
\end{definition}

We recall the definition of asymptotic dimension growth. Originally defined in \cite{D1} in terms of multiplicities of covers with prescribed Lebesgue number, we instead use the definition from \cite{DS1} in terms of covers via disjoint families.

\begin{definition}[\cite{DS1}]
The \textit{asymptotic dimension growth function} $d_X : \mathbb{R}^+ \to \mathbb{N}$ of a metric space $X$ is defined so that $d_X(R)$ is the minimal $n$ such that there exists a uniformly bounded $\mathcal{U}$ with
$$X \stackrel{R,n}{\longrightarrow} \mathcal{U}.$$
\end{definition}

Finited decomposition complexity was introduced in \cite{GTY1}, but since we are interested in defining the weakest possible version of decomposition complexity that implies property A, we will use the weakening defined in \cite{DZ1}.
\begin{definition}
A metric space $X$ has \textit{straight finite decomposition complexity (sFDC)} if for any sequence of positive reals $R_1 \leq R_2 \leq R_3 \leq \dots$, there exist a positive integer $n$ and metric families $(\mathcal{V}_i)_{i=1}^n$ with a decomposition
$$X \stackrel{R_{1},2}{\longrightarrow} \mathcal{V}_{1} \stackrel{R_{2},2}{\longrightarrow} \mathcal{V}_2 \stackrel{R_{3},2}{\longrightarrow} \dots \stackrel{R_{n},2}{\longrightarrow} \mathcal{V}_n$$
with $\mathcal{V}_n$ uniformly bounded
\end{definition}

The following lemmas are standard in asymptotic dimension theory.
\begin{lemma}
Let $f:X\to Y$ be an $(L,C)$-quasi-isometric embedding, and let $\mathcal{V}$ be an $R$-disjoint family of subsets of $Y$. Then
$$f^{-1}(\mathcal{V}) = \{ f^{-1}(V) : V \in \mathcal{V} \}$$
is an $\frac{R-C}{L}$-disjoint family of subsets of $X$. Further, if $\mathcal{V}$ is uniformly bounded with $\mesh(\mathcal{V}) \leq D$, then $f^{-1}(\mathcal{V})$ is uniformly bounded with $mesh(f^{-1}(\mathcal{V})) \leq \frac{D + C}{L}$.
\end{lemma}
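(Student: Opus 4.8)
The plan is to unwind the three definitions in play---quasi-isometric embedding, $R$-disjointness, and uniform boundedness---and to transport the relevant inequalities through $f$ one pair of points at a time. Both assertions rest on the same elementary observation: if $x \in f^{-1}(V)$ and $y \in f^{-1}(V')$ for members $V, V' \in \mathcal{V}$, then $f(x) \in V$ and $f(y) \in V'$, so $d(f(x), f(y))$ is squeezed between quantities determined by the sets $V$, and the defining inequalities $Ld(x,y) - C < d(f(x),f(y)) < Ld(x,y) + C$ then translate this into control of $d(x,y)$.

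For the disjointness statement, I would first note that two distinct members of $f^{-1}(\mathcal{V})$ must be the preimages $f^{-1}(V)$ and $f^{-1}(V')$ of \emph{distinct} sets $V \neq V'$ (equal sets have equal preimages), so $d(V,V') > R$ by hypothesis. For any $x \in f^{-1}(V)$ and $y \in f^{-1}(V')$ we then have $d(f(x),f(y)) \geq d(V,V') > R$, and combining with $d(f(x),f(y)) < Ld(x,y) + C$ gives $d(x,y) > (R-C)/L$. Passing to the infimum over such $x,y$ only yields $d(f^{-1}(V), f^{-1}(V')) \geq (R-C)/L$ on its own, so I would sharpen this by writing $d(V,V') = R + \delta$ with $\delta > 0$: the same computation then gives $d(x,y) > (R + \delta - C)/L$ for every such pair, hence $d(f^{-1}(V), f^{-1}(V')) \geq (R-C)/L + \delta/L > (R-C)/L$, which is the required strict inequality.

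For the mesh bound I would fix $V \in \mathcal{V}$ with $\operatorname{diam}(V) \leq D$ and take $x, y \in f^{-1}(V)$; then $d(f(x),f(y)) \leq \operatorname{diam}(V) \leq D$, and the lower inequality $Ld(x,y) - C < d(f(x),f(y))$ forces $d(x,y) < (D+C)/L$. Thus $\operatorname{diam}(f^{-1}(V)) \leq (D+C)/L$ for every $V$, and therefore $\mesh(f^{-1}(\mathcal{V})) \leq (D+C)/L$. I would also either agree to discard empty preimages from the family or simply remark that an empty set contributes nothing and so is harmless.

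I do not expect a genuine obstacle: the argument is a direct computation. The only two points that require a moment's attention are the passage from a strict pointwise bound to a strict bound on the infimum in the disjointness claim---resolved by the $\delta$ trick above---and keeping straight which of the two quasi-isometry inequalities is used in each part, so that the constants land as $(R-C)/L$ and $(D+C)/L$ rather than some permutation thereof.
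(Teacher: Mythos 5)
Your proof is correct and follows essentially the same computation as the paper's. In fact you are slightly more careful than the paper on one point: the paper's argument only yields $d(x,y)\geq (R-C)/L$ and hence a non-strict bound on $d(f^{-1}(V_1),f^{-1}(V_2))$, whereas your $\delta$ trick genuinely delivers the strict inequality required by the definition of $R$-disjointness.
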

\begin{proof}
Let $f^{-1}(V_1),f^{-1}(V_2) \in f^{-1}(\mathcal{V})$ with $f^{-1}(V_1) \neq f^{-1}(V_2)$. Then $V_1 \neq V_2$, so $d(V_1, V_2) > R$. Hence for any $x\in f^{-1}(V_1), y \in f^{-1}(V_2)$ we have
$$R \leq d(f(x),f(y)) \leq Ld(x,y) + C,$$
so
$$\frac{R-C}{L} \leq d(x,y).$$
Hence $f^{-1}(\mathcal{V})$ is $\frac{R-C}{L}$-disjoint.

Further, if $\mesh(\mathcal{V}) \leq D$, then for any $V \in \mathcal{V}$ and $x,y \in f^{-1}(V)$ we have
$$Ld(x,y) - C \leq d(f(x),f(y)) \leq D.$$
Hence
$$d(x,y) \leq \frac{D+C}{L}.$$
Therefore $\mesh(f^{-1}(\mathcal{V})) \leq \frac{D+C}{L}$.

\end{proof}

\begin{lemma}
Let $f:X \to Y$ be an $(L,C)$-quasi-isometric embedding, and suppose we have a decomposition of metric families
$$\mathcal{U} \stackrel{R,n}{\longrightarrow} \mathcal{V}$$
where $\mathcal{U},\mathcal{V}$ are families of subsets of $Y$. Then $f^{-1}(\mathcal{U}),f^{-1}(\mathcal{V})$ are families of subsets of $X$ with a decomposition
$$f^{-1}(\mathcal{U}) \stackrel{R',n}{\longrightarrow} f^{-1}(\mathcal{V})$$
with $R' = \frac{R-C}{L}$.
\end{lemma}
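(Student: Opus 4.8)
The plan is to deduce this directly from Lemma~1 by unwinding the definition of a decomposition of metric families elementwise. I read $\mathcal{U} \stackrel{R,n}{\longrightarrow} \mathcal{V}$ as the assertion that every $U \in \mathcal{U}$ satisfies $U \stackrel{R,n}{\longrightarrow} \mathcal{V}$ (with $U$ carrying the subspace metric), and correspondingly I must show that every member of $f^{-1}(\mathcal{U})$, i.e.\ every set $f^{-1}(U)$ with $U \in \mathcal{U}$, satisfies $f^{-1}(U) \stackrel{R',n}{\longrightarrow} f^{-1}(\mathcal{V})$ with $R' = \tfrac{R-C}{L}$. So I fix $U \in \mathcal{U}$ together with a witnessing family $\mathcal{W} = \mathcal{W}_1 \cup \dots \cup \mathcal{W}_n \subseteq \mathcal{V}$, where each $\mathcal{W}_i$ is $R$-disjoint and $U = \bigcup \mathcal{W}$.

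The natural candidate decomposition of $f^{-1}(U)$ is the pullback of $\mathcal{W}$. Set $\mathcal{W}'_i = \{\, f^{-1}(W) : W \in \mathcal{W}_i,\ f^{-1}(W) \neq \emptyset \,\}$ and $\mathcal{W}' = \mathcal{W}'_1 \cup \dots \cup \mathcal{W}'_n$. Since each $\mathcal{W}_i$ is an $R$-disjoint family of subsets of $Y$, Lemma~1 applied to $f : X \to Y$ gives that $f^{-1}(\mathcal{W}_i)$ — and hence its subfamily $\mathcal{W}'_i$ — is $R'$-disjoint in $X$. From $\mathcal{W} \subseteq \mathcal{V}$ we get $\mathcal{W}' \subseteq f^{-1}(\mathcal{V})$. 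Finally $f^{-1}(U) = f^{-1}\!\left(\bigcup \mathcal{W}\right) = \bigcup_{W \in \mathcal{W}} f^{-1}(W) = \bigcup \mathcal{W}'$, the last equality holding because every $x \in f^{-1}(U)$ has $f(x) \in W$ for some $W \in \mathcal{W}$, so the members of $\mathcal{W}$ with empty preimage contribute nothing to the union. Thus $\mathcal{W}'$ witnesses $f^{-1}(U) \stackrel{R',n}{\longrightarrow} f^{-1}(\mathcal{V})$, and since $U \in \mathcal{U}$ was arbitrary this proves the lemma.

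There is essentially no obstacle here; it is a bookkeeping exercise on top of Lemma~1. The only two points deserving a moment's care are: (i) an $R$-disjoint family is by convention a family of \emph{nonempty} sets, so one must discard empty preimages when forming $\mathcal{W}'_i$, which is harmless precisely because those preimages add nothing to the covering of $f^{-1}(U)$; and (ii) $R$-disjointness of $\mathcal{W}_i$ is unaffected by whether its members are viewed as subsets of $U$ or of the ambient space $Y$ (the relevant infima of distances agree), so Lemma~1 applies verbatim and no separate treatment of the restricted map $f|_{f^{-1}(U)}$ is needed.
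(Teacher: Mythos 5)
Your proof is correct and follows essentially the same route as the paper's: fix $U \in \mathcal{U}$, pull back the witnessing $R$-disjoint subfamilies of $\mathcal{V}$ through $f^{-1}$, invoke Lemma~1 for $R'$-disjointness, and observe the pullbacks still cover $f^{-1}(U)$. The extra care you take about discarding empty preimages and about disjointness being the same whether measured in $U$ or in $Y$ is sensible bookkeeping the paper leaves implicit.
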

\begin{proof}
We need to show that each $f^{-1}(U) \in f^{-1}(\mathcal{U})$ is a union of $n$ $R'-disjoint$ subfamilies of $f^{-1}(\mathcal{V})$. Let $U \in \mathcal{U}$. By assumption there are subfamilies $(\mathcal{V}_i)_{i=1}^n$ of $\mathcal{V}$ such that $\mathcal{V}_1 \cup \mathcal{V}_2 \cup \dots \cup \mathcal{V}_n$ is a cover of $U$. Then $f^{-1}(\mathcal{V}_1) \cup f^{-1}(\mathcal{V}_2) \cup \dots \cup f^{-1}(\mathcal{V}_n)$ is a cover of $f^{-1}(U)$, and by Lemma 1 each $f^{-1}(\mathcal{V}_i)$ is $\frac{R-C}{L}$-disjoint.
\end{proof}
We say a non-decreasing function $s:\mathbb{R}^+ \to \mathbb{N}$ is subexponential if 
$$\lim_{x \to \infty} \sqrt[x]{s(x)} = 1.$$
We say non-decreasing functions $s,t:\mathbb{R}^+ \to \mathbb{N}$ have the same \textit{growth} if there exist positive constants $a,c$ such that $s(ax) \geq t(x) - c$ and $t(ax) \geq s(x) - c$ for all $x > 0$, and we write $s \sim t$. We say $s$ has constant (polynomial, exponential) growth if it has the same growth as a constant (polynomial, exponential) function. Note that if $s$ is subexponential and $t$ has the same growth as $s$, then $t$ is subexponential.

Given a finitely generated group $G$ with finite symmetric (closed under inverses) generating set $S$, the \textit{word length metric} of $G$ is given by 
$$d(g,h) = |g^{-1}h|_S$$
where $| g |_S$ is the length of the shortest word in elements of $S$ equal to $g$ in $G$. Any word length metric is invariant under the action of $G$ on itself by left multiplication, and any two word length metrics on a given finitely generated group $G$ are quasi-isometric. Similarly, given a countable group $G$, there exists a proper (meaning locally finite), left-invariant metric on $G$, unique up to coarse equivalence. When we speak of a finitely generated group $G$ as a metric space, we assume it is equipped with a word length metric, and a countable group a proper left-invariant metric.

We say a metric space has \textit{bounded geometry} if it is locally finite, and for every $R \in \mathbb{R}^+$ there is $N(r)$ such that every $x \in X$ has $|B(x,r)| \leq N(r)$. Clearly all finitely generated and countable groups with metrics as above have bounded geometry.

\section{Decomposition Complexity Growth and Property A}
\label{decomp}
\begin{definition}
A non-decreasing function $s:\mathbb{R}^+\to \mathbb{N}$ is a \textit{decomposition complexity growth function} for a metric space $X$ if, for any sequence $R_1 \leq R_2 \leq R_3 \leq \dots$ of positive reals, there exist a positive integer $n$ and metric families $(\mathcal{V}_i)_{i=1}^n$ with a decomposition
$$X \stackrel{R_{1},s(R_1)}{\longrightarrow} \mathcal{V}_{1} \stackrel{R_{2},s(R_2)}{\longrightarrow} \mathcal{V}_2 \stackrel{R_{3},s(R_3)}{\longrightarrow} \dots \stackrel{R_{n},s(R_n)}{\longrightarrow} \mathcal{V}_n$$
with $\mathcal{V}_n$ uniformly bounded.
\end{definition}
\begin{definition}
We say a metric space $X$ has \textit{subexponential (constant, polynomial) decomposition growth} if there is a decomposition complexity growth function $s$ for $X$ such that $s$ has the same growth type as some subexponential (constant, polynomial, exponential) function.
\end{definition}
\begin{theorem}
If $s:\mathbb{R}^+ \to \mathbb{N}$ is a decomposition growth function for $Y$, and $f:X\to Y$ is a quasi-isometric embedding, then there is a function $t:\mathbb{R}^+ \to \mathbb{N}$ such that $s \sim t$ and $t$ is a decomposition growth function for $X$. Hence existence of a decomposition complexity growth function of a given growth type is a quasi-isometry invariant.
\end{theorem}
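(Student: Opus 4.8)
The plan is to transport a decomposition of $Y$ back to $X$ along $f$, after rescaling the input sequence to compensate for the quasi-isometry constants. Fix $(L,C)$ so that $f$ is an $(L,C)$-quasi-isometric embedding, and let $R_1 \le R_2 \le R_3 \le \cdots$ be a sequence of positive reals for which we must produce a decomposition of $X$. First I would form the rescaled sequence $R_i' := LR_i + C$, which is again a non-decreasing sequence of positive reals, and feed it to the hypothesis: since $s$ is a decomposition growth function for $Y$, there are a positive integer $n$ and metric families $(\mathcal{V}_i)_{i=1}^n$ with $Y \stackrel{R_1',\,s(R_1')}{\longrightarrow} \mathcal{V}_1 \stackrel{R_2',\,s(R_2')}{\longrightarrow} \cdots \stackrel{R_n',\,s(R_n')}{\longrightarrow} \mathcal{V}_n$ and $\mathcal{V}_n$ uniformly bounded.

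Next I would pull this chain back. Viewing $X = f^{-1}(Y)$ as the one-element family $\{X\} = f^{-1}(\{Y\})$ and setting $\mathcal{V}_0 := \{Y\}$, apply Lemma 2 to each arrow $\mathcal{V}_{i-1} \stackrel{R_i',\,s(R_i')}{\longrightarrow} \mathcal{V}_i$; since $\tfrac{R_i' - C}{L} = R_i$, this yields $f^{-1}(\mathcal{V}_{i-1}) \stackrel{R_i,\,s(R_i')}{\longrightarrow} f^{-1}(\mathcal{V}_i)$. Concatenating gives
\[
X \stackrel{R_1,\,s(R_1')}{\longrightarrow} f^{-1}(\mathcal{V}_1) \stackrel{R_2,\,s(R_2')}{\longrightarrow} \cdots \stackrel{R_n,\,s(R_n')}{\longrightarrow} f^{-1}(\mathcal{V}_n),
\]
and by the mesh bound in Lemma 1 the family $f^{-1}(\mathcal{V}_n)$ is uniformly bounded. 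Thus, setting $t(R) := s(LR + C)$ — a non-decreasing function $\mathbb{R}^+ \to \mathbb{N}$, being a composition of the non-decreasing $s$ with an increasing affine map — the chain above is exactly a witness that $t$ is a decomposition growth function for $X$, since $s(R_i') = t(R_i)$ at every step and $n$ depends only on the (rescaled) sequence.

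It remains to verify $s \sim t$, and this affine-reparametrization comparison is the only step needing care: the additive constant $C$ prevents $LR + C$ and $R$ from being comparable by a multiplicative factor alone near $R = 0$, so one must exploit the additive slack allowed in the definition of $\sim$. Choosing $a \ge \max\{1,\, 1/L,\, L+1\}$, monotonicity of $s$ gives $t(aR) = s(LaR + C) \ge s(R)$ for all $R > 0$ (as $LaR \ge R$), so $t(aR) \ge s(R) - c$ holds with any $c \ge 0$; and for $R \ge C$ we have $aR \ge (L+1)R \ge LR + C$, whence $s(aR) \ge s(LR+C) = t(R)$, while for $0 < R < C$ monotonicity gives $t(R) = s(LR+C) \le s((L+1)C)$, so $s(aR) \ge 0 \ge t(R) - s((L+1)C)$; taking $c = s((L+1)C)$, a constant depending only on $f$ and $s$, makes $s(aR) \ge t(R) - c$ hold for all $R > 0$. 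Hence $s \sim t$, and in particular $t$ is subexponential, polynomial, etc., exactly when $s$ is. Finally, since a quasi-isometry is a quasi-isometric embedding admitting a quasi-isometric embedding as a coarse inverse, applying the above in both directions together with transitivity of $\sim$ shows that existence of a decomposition complexity growth function of a prescribed growth type is a quasi-isometry invariant.
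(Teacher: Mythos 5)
Your argument is correct and follows essentially the same route as the paper: rescale the input sequence to $R_i' = LR_i + C$, pull the resulting chain for $Y$ back through $f$ via Lemma 2, bound the mesh of $f^{-1}(\mathcal{V}_n)$ via Lemma 1, and set $t(R) = s(LR+C)$. Your careful verification that $s \sim t$ (handling the small-$R$ regime with the additive constant $c = s((L+1)C)$) fills in a step the paper dismisses as ``clear,'' but the proof is otherwise the same.
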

\begin{proof}
Suppose $f:X \to Y$ is an $(L,C)$-quasi-isometric embedding, and let $t(x) = s(Lx + C)$. Clearly $t$ and $s$ have the same growth. We want to show that $t$ is a decomposition complexity growth function for $X$. Let $R_1\leq R_2\leq \dots$ be a sequence of positive reals, and for each $i$ let $R_i' = LR_{i}+C$. Then there exist families of subsets of $Y$ $\mathcal{V}_i$ such that
$$Y \stackrel{R_1',s(R_1')}{\longrightarrow} \mathcal{V}_{1} \stackrel{R_2',s(R_2')}{\longrightarrow} \mathcal{V}_2 \stackrel{R_3',s(R_3')}{\longrightarrow} \dots \stackrel{R_n',s(R_n')}{\longrightarrow} \mathcal{V}_n$$
with $\mathcal{V}_n$ uniformly bounded. Now applying Lemma 2 to each decomposition above we obtain a decomposition
$$X \stackrel{R_1,s(R_1')}{\longrightarrow} f^{-1}(\mathcal{V}_{1}) \stackrel{R_2,s(R_2')}{\longrightarrow} f^{-1}(\mathcal{V}_2) \stackrel{R_3,s(R_3')}{\longrightarrow} \dots \stackrel{R_n,s(R_n')}{\longrightarrow} f^{-1}(\mathcal{V}_n).$$
But $s(R_i') = s(LR_i+C) = t(R_i)$ for each $i$, and $f^{-1}(\mathcal{V}_n)$ is uniformly bounded by Lemma 1. We conclude that $t$ is a decomposition complexity growth function for $X$.
\end{proof}

The following is immediate from Theorem 3.1.
\begin{corollary}
The property of subexponential (resp. constant, polynomial) decomposition growth is a quasi-isometry invariant.
\end{corollary}

Subexponential decomposition growth is a weakening of both straight finite decomposition complexity and subexponential asymptotic dimension growth.
\begin{proposition}
Any metric space with straight finite decomposition complexity has subexponential decomposition growth.
\end{proposition}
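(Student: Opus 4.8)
The plan is to observe that the condition defining straight finite decomposition complexity is, verbatim, the assertion that the constant function with value $2$ is a decomposition complexity growth function for $X$. Accordingly, the first step is to set $s:\mathbb{R}^+\to\mathbb{N}$ to be the function identically equal to $2$; since it is non-decreasing, it is an admissible candidate for a decomposition complexity growth function.

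The second step is to verify the decomposition condition for $s$. Given an arbitrary sequence $R_1\leq R_2\leq R_3\leq\cdots$ of positive reals, the hypothesis of sFDC produces a positive integer $n$ and metric families $(\mathcal{V}_i)_{i=1}^n$ with $X \stackrel{R_1,2}{\longrightarrow}\mathcal{V}_1\stackrel{R_2,2}{\longrightarrow}\cdots\stackrel{R_n,2}{\longrightarrow}\mathcal{V}_n$ and $\mathcal{V}_n$ uniformly bounded. Since $s(R_i)=2$ for every $i$, this is literally a decomposition of the form $X \stackrel{R_1,s(R_1)}{\longrightarrow}\mathcal{V}_1\stackrel{R_2,s(R_2)}{\longrightarrow}\cdots\stackrel{R_n,s(R_n)}{\longrightarrow}\mathcal{V}_n$ with $\mathcal{V}_n$ uniformly bounded, which is exactly what the definition of a decomposition complexity growth function requires. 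Hence $s$ is a decomposition complexity growth function for $X$.

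The third step is to identify the growth type of $s$: the constant function $2$ has the same growth as itself, and it is subexponential because $\lim_{x\to\infty}\sqrt[x]{2}=1$. Consequently $X$ has constant, and in particular subexponential, decomposition growth. I do not expect a genuine obstacle here, as the argument merely unwinds the definitions; the only point worth spelling out is that constant functions qualify as subexponential in the sense of Section~\ref{prelim}, so that the chosen $s$ meets the hypothesis in the definition of subexponential decomposition growth. (The same reasoning in fact shows that $X$ has sFDC if and only if $X$ has constant decomposition growth, which places the proposition in context.)
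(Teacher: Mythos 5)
Your proof is correct and follows essentially the same route as the paper: both observe that the sFDC condition is precisely the statement that the constant function $s\equiv 2$ is a decomposition complexity growth function, which is constant and hence subexponential. (Your parenthetical "if and only if" aside would need a separate argument that a constant growth function with value $c>2$ can be converted to one with value $2$, but that claim is not part of the proposition and does not affect the proof.)
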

\begin{proof}
Suppose a metric space $X$ has straight finite decomposition complexity. Then for any sequence of positive reals $R_1 \leq R_2 \leq R_3 \leq \dots$ there exists a decomposition
$$X \stackrel{R_{1},2}{\longrightarrow} \mathcal{V}_{1} \stackrel{R_{2},2}{\longrightarrow} \mathcal{V}_2 \stackrel{R_{3},2}{\longrightarrow} \dots \stackrel{R_{n},2}{\longrightarrow} \mathcal{V}_n$$
such that $\mathcal{V}_n$ is uniformly bounded. Hence the constant function $s(x) = 2$ is a decomposition complexity growth function for $X$.
\end{proof}
\begin{proposition}
Any metric space with subexponential asymptotic dimension growth has subexponential decomposition growth.
\end{proposition}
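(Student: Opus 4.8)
The plan is to observe that the asymptotic dimension growth function $d_X$ itself already serves as a decomposition complexity growth function for $X$, so there is essentially nothing to construct. First I would record that $d_X$ is non-decreasing: if $R \leq R'$, then every $R'$-disjoint family is in particular $R$-disjoint, so any decomposition $X \stackrel{R',n}{\longrightarrow} \mathcal{U}$ is also a decomposition $X \stackrel{R,n}{\longrightarrow} \mathcal{U}$; applying this with $n = d_X(R')$ and $\mathcal{U}$ uniformly bounded gives $d_X(R) \leq d_X(R')$. Note that since $X$ is assumed to have subexponential asymptotic dimension growth, $d_X(R)$ is finite for every $R$, so this all makes sense and $d_X : \mathbb{R}^+ \to \mathbb{N}$ is a genuine non-decreasing function.

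Next I would check the defining property of a decomposition complexity growth function, which here is immediate using chains of length one. Given any sequence $R_1 \leq R_2 \leq R_3 \leq \dots$ of positive reals, the very definition of $d_X(R_1)$ provides a uniformly bounded family $\mathcal{V}_1$ of subsets of $X$ with $X \stackrel{R_1, d_X(R_1)}{\longrightarrow} \mathcal{V}_1$. Taking $n = 1$ and the single family $\mathcal{V}_1$, this is exactly the decomposition demanded in the definition of a decomposition complexity growth function with $s = d_X$ (a length-one chain is permitted, since $n=1$ is a positive integer, and $\mathcal{V}_1 = \mathcal{V}_n$ is uniformly bounded). Hence $d_X$ is a decomposition complexity growth function for $X$.

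Finally, since $X$ has subexponential asymptotic dimension growth, $d_X$ has the same growth as some subexponential function, so $d_X$ witnesses that $X$ has subexponential decomposition growth, as required.

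I do not expect a real obstacle in this argument; the only points requiring (minor) care are the monotonicity of $d_X$ and the remark that a single dimension-type decomposition over a uniformly bounded family is already a valid length-one decomposition chain, so no iteration of decompositions is needed. The content of the proposition is really just this unwinding of definitions.
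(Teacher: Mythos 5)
Your proposal is correct and takes essentially the same approach as the paper: both arguments observe that a single dimension-type decomposition over a uniformly bounded family is already a valid length-one decomposition chain, so the dimension growth function (the paper uses a subexponential upper bound $s$ on it, you use $d_X$ itself together with its monotonicity) is itself a decomposition complexity growth function. Your write-up is, if anything, slightly more careful about the monotonicity point.
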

\begin{proof}
Let $X$ be a metric space, and let $s$ be a subexponential function giving an upper bound on the dimension growth of $X$. Then for any positive real $R$ there is a decomposition
$$X \stackrel{R,s(R)}{\longrightarrow} \mathcal{V}$$
with $\mathcal{V}$ uniformly bounded.
\end{proof}
Instead of stating the original definition of G. Yu's property A, we take the equivalent characterization used in \cite{O1} and originally stated in \cite{W1}.
\begin{theorem}[\cite{W1}]
Let $X$ be a metric space with bounded geometry. $X$ has property $A$ if and only if there is a sequence of functions $f^n : X \to \ell_1(X)$ such that
\begin{enumerate}
\item $f_x^n \geq 0$ and  $\| f_x^n \| = 1$ for all $n$ and $x$,
\item for every $n$ there is $S_n > 0$ such that $\text{supp}f_x^n \subset B(x,S_n)$ for all $x \in X$,
\item for every $R > 0$, $\lim_n \sup \{ \| f_x^n-f_y^n \| : d(x,y) \leq R \} = 0.$
\end{enumerate}
\end{theorem}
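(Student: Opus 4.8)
The statement is the equivalence, established in \cite{W1}, between Yu's original definition of property A and a Reiter-type condition in $\ell_1(X)$, and the plan is to prove each implication by translating between finite subsets of $X\times\mathbb{N}$ and normalized nonnegative functions in $\ell_1(X)$. First I would recall Yu's definition: $X$ has property A if and only if for every $R>0$ and $\varepsilon>0$ there exist $S>0$ and an assignment $x\mapsto A_x$ of finite nonempty subsets of $X\times\mathbb{N}$ with $A_x\subset B(x,S)\times\mathbb{N}$ and $|A_x\triangle A_y|/|A_x\cap A_y|<\varepsilon$ whenever $d(x,y)\le R$. The device underlying both directions is the elementary ``subgraph'' dictionary: to a nonnegative $\xi\in\ell_1(X)$ with $\|\xi\|_1=1$ whose values lie in $\tfrac1M\mathbb{Z}$ one associates $\widehat\xi=\{(y,k):1\le k\le M\xi(y)\}\subset X\times\mathbb{N}$, a finite set with $|\widehat\xi|=M$ and $|\widehat\xi\triangle\widehat\eta|=M\|\xi-\eta\|_1$, and conversely one recovers such a $\xi$ from a finite subset of $X\times\mathbb{N}$ by averaging out the $\mathbb{N}$-coordinate. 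Bounded geometry enters precisely here, since it bounds the cardinality of the supports involved and so lets the resolution $M$ be chosen uniformly in $x$.

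For the direction ``property A $\Rightarrow$ existence of the $f^n$'' I would apply Yu's definition with $R=n$ and $\varepsilon=1/n$ to obtain families $\{A^n_x\}$ with support radius $S_n$, and set $f^n_x(y)=|A^n_x\cap(\{y\}\times\mathbb{N})|/|A^n_x|$. Then $f^n_x\ge 0$, $\|f^n_x\|_1=1$, and $\mathrm{supp}\,f^n_x\subset B(x,S_n)$, which gives (1) and (2). For (3) I would estimate, whenever $d(x,y)\le n$,
\[
\|f^n_x-f^n_y\|_1\ \le\ \Bigl|1-\tfrac{|A^n_x|}{|A^n_y|}\Bigr|+\tfrac{|A^n_x\triangle A^n_y|}{|A^n_y|}\ \le\ \tfrac{2\,|A^n_x\triangle A^n_y|}{|A^n_x\cap A^n_y|}\ <\ \tfrac2n,
\]
using $\bigl||A^n_x|-|A^n_y|\bigr|\le|A^n_x\triangle A^n_y|$; then for each fixed $R$ the supremum of $\|f^n_x-f^n_y\|_1$ over $d(x,y)\le R$ tends to $0$ as $n\to\infty$.

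For the converse direction ``existence of the $f^n$ $\Rightarrow$ property A'' I would fix $R>0$ and $\varepsilon>0$, choose $n$ with $\sup\{\|f^n_x-f^n_y\|_1:d(x,y)\le R\}<\varepsilon/3$, then use bounded geometry to pick a single $M$ (of size a constant multiple of $N(S_n)/\varepsilon$) and replace each $f^n_x$ by a nonnegative $g_x$ with values in $\tfrac1M\mathbb{Z}$, the same support bound, $\sum_y g_x(y)=1$, and $\|f^n_x-g_x\|_1<\varepsilon/3$, obtained by rounding the finitely many values of $f^n_x$ and absorbing the rounding error into one coordinate. Setting $A_x:=\widehat{g_x}\subset B(x,S_n)\times\mathbb{N}$ gives finite nonempty sets with $|A_x|=M$, and for $d(x,y)\le R$ the dictionary yields $|A_x\triangle A_y|/|A_x\cap A_y|\le C\|g_x-g_y\|_1\le C\varepsilon$ for a harmless absolute constant $C$, so after rescaling $\varepsilon$ this is Yu's condition. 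The hard part will be the bookkeeping in this last direction: producing one common denominator $M$ that works for all $x$ simultaneously while keeping the support radius fixed under the approximation and discretization. This is exactly where the bounded-geometry hypothesis is indispensable --- without a uniform bound on support sizes the subdivision could not be made uniform in $x$ --- whereas the remaining manipulations are routine.
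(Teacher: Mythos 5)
The paper does not prove this statement; it is quoted verbatim from Willett's notes \cite{W1} as a known characterization of property A, so there is no in-paper argument to compare against. Your proof is essentially the standard one from \cite{W1}: both directions via the dictionary between normalized $\tfrac1M\mathbb{Z}$-valued elements of $\ell_1(X)$ and finite subsets of $X\times\mathbb{N}$, with the estimate $\|f_x-f_y\|_1\le 2\,|A_x\triangle A_y|/|A_x\cap A_y|$ in one direction and the bounded-geometry-dependent choice of a single denominator $M$ (so that $|A_x|=M$ for all $x$ and $|A_x\cap A_y|\ge M/2$) in the other. The argument is correct and correctly identifies the only delicate point, namely the uniform discretization in the converse direction.
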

We need the following lemma, extracted from the main proof in \cite{O1}.
\begin{lemma}[\cite{O1}]
Let $X$ be a metric space and $\mathcal{U}$ an open cover of $X$ with Lebesgue number $\geq \lambda \in \mathbb{N}$. Then there is a map $f: X \to \ell_1(\mathcal{U})$ such that any $x,y \in X$ with $d(x,y) = D$ with $2D+1 \leq \lambda$ have
$$\| f(x) - f(y) \| \leq 2(1 - m(\mathcal{U})^{-2D/\lambda})$$
\end{lemma}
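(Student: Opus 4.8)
The plan is to build $f$ explicitly from the ``depth'' functions $\phi_U(x)=\min\{d(x,X\setminus U),\lambda\}$, which record how far inside $U$ the point $x$ sits. Put $w_U(x)=0$ when $x\notin U$ and $w_U(x)=m(\mathcal U)^{\phi_U(x)/\lambda}$ when $x\in U$, and set $f(x)=w(x)/\|w(x)\|_{\ell_1}$. Since at most $m(\mathcal U)$ members of $\mathcal U$ contain any given point, $w(x)$ has finite support and $f(x)$ is a genuine probability vector in $\ell_1(\mathcal U)$, with no extra hypotheses on $X$ needed. Because $f(x)$ and $f(y)$ are nonnegative and of norm one, $\|f(x)-f(y)\|=2\bigl(1-\sum_{U}\min\{f(x)_U,f(y)_U\}\bigr)$, so everything reduces to the overlap estimate $\sum_U\min\{f(x)_U,f(y)_U\}\ge m(\mathcal U)^{-2D/\lambda}$.

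I would then collect three ingredients, abbreviating $m=m(\mathcal U)$ and $W(x)=\|w(x)\|_{\ell_1}$. (i) The Lebesgue number hypothesis furnishes, for each $x$, a set $U_x$ with $B(x,\lambda)\subseteq U_x$; then $\phi_{U_x}(x)=\lambda$, so $w_{U_x}(x)=m$ is the largest weight possible, and since $D<\lambda$ the point $y$ also lies in $U_x$ with $\phi_{U_x}(y)\ge\lambda-D$, hence $w_{U_x}(y)\ge m^{\,1-D/\lambda}$. (ii) Each $\phi_U$ is $1$-Lipschitz, so for a set $U$ containing both $x$ and $y$ we have $|\phi_U(x)-\phi_U(y)|\le D$, whence $w_U(y)/w_U(x)\in[m^{-D/\lambda},m^{D/\lambda}]$ and therefore $\min\{w_U(x),w_U(y)\}\ge m^{-D/\lambda}\max\{w_U(x),w_U(y)\}$. (iii) A set $U$ with $x\in U$ but $y\notin U$ has $d(x,X\setminus U)\le d(x,y)=D$, so $w_U(x)\le m^{D/\lambda}$, and there are at most $m-1$ such sets since $U_x$ is not among them.

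To finish, split the members of $\mathcal U$ that contain $x$ into those also containing $y$, whose combined weight I call $W_{\mathrm c}(x)$, and the remaining ones, whose combined weight is at most $(m-1)m^{D/\lambda}$ by (iii); thus $W(x)\le W_{\mathrm c}(x)+(m-1)m^{D/\lambda}$, with $W_{\mathrm c}(x)\ge w_{U_x}(x)=m$ by (i), and symmetrically for $y$. From $\sum_U\min\{f(x)_U,f(y)_U\}\ge\max\{W(x),W(y)\}^{-1}\sum_U\min\{w_U(x),w_U(y)\}$, applying (ii) termwise over the sets common to $x$ and $y$ bounds the numerator below by $m^{-D/\lambda}\max\{W_{\mathrm c}(x),W_{\mathrm c}(y)\}$; the hypothesis $2D+1\le\lambda$ makes $m^{D/\lambda}$ small enough relative to $m$ that the ``stray'' weight $(m-1)m^{D/\lambda}$ is absorbed into a controlled fraction of $W_{\mathrm c}$, and assembling the inequalities gives the desired $\sum_U\min\{f(x)_U,f(y)_U\}\ge m^{-2D/\lambda}$.

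The genuinely delicate point is the last one: the quantitative control of the sets that contain exactly one of $x,y$, together with the ``shallow'' common sets whose depth has dropped nearly to $0$. A naive bound gives these sets an aggregate influence that degrades the overlap by a factor that is linear, not logarithmic, in $m$, which is far too weak — a clean proof must exploit that the deep set $U_x$ dominates every shallow set by the large factor $m^{\,1-D/\lambda}$, and it is precisely here that $2D+1\le\lambda$ is needed, very likely together with the integrality of $\lambda$ so that one argues discretely over the depth levels $1,\dots,\lambda$ rather than with the continuous interpolation sketched above. I expect this bookkeeping to be the main obstacle, and if the weights as defined lose a bounded constant one corrects by refining the weighting — for instance by redistributing the stray mass along the depth levels, or by comparing directly against the subcovers $\{\{x:d(x,X\setminus U)\ge k\}:U\in\mathcal U\}$ for $k=1,\dots,\lambda$.
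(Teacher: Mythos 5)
Your construction does not prove the lemma, and the difficulty you flag at the end is not bookkeeping but a fatal defect of the weighting itself. The weight $w_U(x)=m^{\phi_U(x)/\lambda}$ does not vanish as $x$ approaches the boundary of $U$: at depth $0$ it equals $1$, while the total mass $W(x)$ need only be about $m$. So a single set containing $x$ but not $y$ contributes roughly $1/m$ to $\|f(x)-f(y)\|$ after normalization, a quantity that does not decay with $D/\lambda$. Concretely, take $m=2$, $D=1$, $\lambda$ large, and a cover of $\mathbb{R}$ by overlapping intervals with multiplicity $2$ and Lebesgue number $\lambda$, with $x$ at depth $\ge\lambda$ in one set $U_1$ and at depth $1$ in a second set $U_2$ not containing $y$: then $w(x)=(2,2^{1/\lambda})$, $w(y)=(2,0)$, and $\|f(x)-f(y)\|=\tfrac{2\cdot 2^{1/\lambda}}{2+2^{1/\lambda}}\to \tfrac23$, whereas the required bound $2(1-2^{-2/\lambda})\to 0$. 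Your own estimate (iii) only bounds the stray weight by $(m-1)m^{D/\lambda}$, which is comparable to the anchor weight $w_{U_x}(x)=m$, so the absorption you hope for in the final paragraph cannot occur: the inequality $\sum_U\min\{f(x)_U,f(y)_U\}\ge m^{-2D/\lambda}$ is simply false for this $f$, and no rearrangement of the last step will recover it.

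The paper's proof (Ozawa's) avoids the boundary problem with a different device. For each radius $k$ it sets $S_x(k)=\{U\in\mathcal U: B(x,k)\subset U\}$, lets $\xi_S$ be the uniform probability vector on a finite $S\subset\mathcal U$, and averages over a band of radii: $f_x=\frac1\lambda\sum_{k=\lambda+1}^{2\lambda}\xi_{S_x(k)}$. The inclusions $S_x(k+D)\subset S_x(k)\cap S_y(k)$ and $S_x(k)\cup S_y(k)\subset S_x(k-D)$ give $\|\xi_{S_x(k)}-\xi_{S_y(k)}\|\le 2\bigl(1-|S_x(k+D)|/|S_x(k-D)|\bigr)$, and the arithmetic--geometric mean inequality turns the average of these ratios into a telescoping product in which only $2D$ factors survive in the numerator and $2D$ in the denominator, each lying in $[1,m]$; that is precisely where $m^{-2D/\lambda}$ comes from. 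Averaging over radii is what tames the sets containing exactly one of $x,y$ (a set only changes its membership in $S_x(k)$ for the few $k$ near its depth threshold), and the telescoping supplies the multiplicative, rather than additive, dependence on $m$. Your closing suggestion of comparing against the level sets $\{x: d(x,X\setminus U)\ge k\}$ for $k=1,\dots,\lambda$ is essentially this mechanism, but to make the argument work you must commit to it from the start rather than to the exponential weights.
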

\begin{proof}
Let $X, \mathcal{U}, \lambda, x, y, D$ be as above. For each $k \in \mathbb{N}$ let $S_x(k) = \{ U \in \mathcal{U} : B(x,k) \subset U \}$. For any finite set $S \subset \mathcal{U}$ let $\xi_S \in \ell_1(\mathcal{U})$ be defined for all $U \in S$ as 
$$\xi_S(U) = \frac{1}{|S|}$$
and $\xi_S(U) = 0$ if $U \notin S$. Note that, since $D = d(x,y)$, if $B(x,k+D) \subset U$, then $B(y,k) \subset U$, and also $B(x,k) \subset U$. Hence $S_x(k+D) \subset S_x(k) \cap S_y(k)$. Similarly $S_x(k) \cup S_y(k) \subset S_x(k-D)$ if $k \geq D$. Hence for any $k \geq D$
$$\| \xi_{S_x(k)} - \xi_{S_y(k)} \| = 2 \bigg ( 1 - \dfrac{| S_x(k) \cap S_y(k) |}{\text{max} \{ |S_x(k)|, |S_y(k)| \} } \bigg ) \leq 2 \bigg ( 1 - \dfrac{| S_x(k + D) |}{| S_x(k - D) |} \bigg ).$$
For each $z \in X$ let
$$f_z = \frac{1}{\lambda} \sum_{k=\lambda+1}^{2\lambda}\xi_{S_z(k)} \in \ell_1(\mathcal{U}).$$
Then
$$\| f_x - f_y \| \leq \frac{1}{\lambda} \sum_{k=\lambda+1}^{2\lambda} \| \xi_{S_x(k)} - \xi_{S_y(k)} \| \leq \frac{2}{\lambda}\sum_{k=\lambda+1}^{2\lambda} \bigg ( 1 - \dfrac{| S_x(k + D) |}{| S_x(k - D) |}\bigg ).$$
But
$$\frac{1}{\lambda} \sum_{k=\lambda+1}^{2\lambda} \dfrac{| S_x(k + D) |}{| S_x(k - D) |} \geq \bigg ( \prod_{k=\lambda+1}^{2\lambda} \dfrac{| S_x(k + D) |}{| S_x(k - D) |} \bigg )^{1/\lambda} = \bigg ( \dfrac{ \prod_{k=2\lambda - D +1}^{2\lambda + D}| S_x(k) |}{ \prod_{k=\lambda - D + 1}^{\lambda + D}| S_x(k) |} \bigg )^{1/\lambda} \geq m(\mathcal{U})^{-2D/\lambda}$$
Hence
$$\| f_x - f_y \| \leq 2(1 - m(\mathcal{U})^{-2D/\lambda} ).$$
\end{proof}
\begin{theorem}
Any bounded geometry metric space with subexponential decomposition growth has property $A$.
\end{theorem}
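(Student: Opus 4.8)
\end{theorem}

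\begin{proof}
The plan is to verify the characterization of property $A$ from Theorem 3.2. It suffices to construct, for every $R>0$ and $\varepsilon>0$, a partition of unity $\{\phi_\alpha\}$ on $X$ whose supports all have diameter at most some $S=S(R,\varepsilon)$ and which satisfies $\sum_\alpha|\phi_\alpha(x)-\phi_\alpha(y)|<\varepsilon$ whenever $d(x,y)\le R$: applying this with $R=n$ and $\varepsilon=1/n$ and setting $f^n_x=\sum_\alpha\phi_\alpha(x)\,\delta_{p_\alpha}$ for a chosen $p_\alpha\in\operatorname{supp}\phi_\alpha$ then gives a sequence of functions as in Theorem 3.2, since each $f^n_x$ is a nonnegative unit vector of $\ell_1(X)$ supported in $B(x,S_n)$, and condition (3) follows from the variation estimate. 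Bounded geometry is used only to invoke Theorem 3.2.

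To build the partition of unity, fix $R$ and $\varepsilon$, let $s$ be a subexponential decomposition complexity growth function for $X$, and feed the definition a rapidly increasing sequence $R_1\le R_2\le\cdots$ to be specified as we go. This produces an integer $n$ and metric families with
$$X \stackrel{R_{1},s(R_1)}{\longrightarrow} \mathcal{V}_{1} \stackrel{R_{2},s(R_2)}{\longrightarrow} \mathcal{V}_2 \stackrel{R_{3},s(R_3)}{\longrightarrow} \dots \stackrel{R_{n},s(R_n)}{\longrightarrow} \mathcal{V}_n$$
with $\mathcal{V}_n$ uniformly bounded, say of mesh $\le D$. I would read this as a rooted tree: $X$ is covered by a family $\mathcal{U}^{(1)}$ that splits into $s(R_1)$ many $R_1$-disjoint subfamilies; each node $U$ at level $i-1$ is covered by a family of subsets of $U$ that splits into $s(R_i)$ many $R_i$-disjoint subfamilies; and the leaves at level $n$ have diameter $\le D$. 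Inside a node $U$ at level $i-1$, thickening its $R_i$-disjoint families by $R_i/3$ gives an open cover of $U$ of multiplicity $\le s(R_i)$ and Lebesgue number $\ge\lfloor R_i/3\rfloor$, so Lemma 3 supplies a partition of unity $\{\psi^{(i)}_W\}$ on $U$ subordinate to that cover with $\sum_W|\psi^{(i)}_W(x)-\psi^{(i)}_W(y)|\le 2\bigl(1-s(R_i)^{-2R/\lfloor R_i/3\rfloor}\bigr)$ for $x,y\in U$ with $d(x,y)\le R$, once $R_i$ is large enough that $2R+1\le\lfloor R_i/3\rfloor$. Because $s$ is subexponential, $s(R_i)^{-2R/\lfloor R_i/3\rfloor}=\exp\bigl(-2R\log s(R_i)/\lfloor R_i/3\rfloor\bigr)\to 1$ as $R_i\to\infty$, so at each level I can choose $R_i$ large enough to push the level-$i$ error below $\varepsilon\,2^{-i}$. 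The crucial point is that this succeeds no matter how large the integer $n$ ultimately turns out to be, because Lemma 3 gives an estimate governed by the ratio $\log s(R_i)/R_i$ rather than by $s(R_i)$ itself. Multiplying the bump functions along each root-to-leaf branch of the tree then yields a partition of unity on $X$, and the telescoping inequality $\bigl|\prod_j a_j-\prod_j b_j\bigr|\le\sum_j\bigl(\prod_{k<j}b_k\bigr)\,|a_j-b_j|\,\bigl(\prod_{k>j}a_k\bigr)$, summed over branches and using that each family of within-node bump functions sums to $1$, bounds the total variation of this partition of unity over $R$-close pairs by $\sum_{i=1}^n\varepsilon\,2^{-i}<\varepsilon$.

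The main obstacle, glossed over above, is gluing the level-$i$ data onto the level-$(i-1)$ data. The family $\{\psi^{(i)}_W\}$ lives on the node $U$, but the level-$(i-1)$ bump function supported near $U$ need not vanish on the frontier of $U$, so a naive product of bump functions has jumps there; worse, the intermediate nodes are not uniformly bounded (only $\mathcal{V}_n$ is), so one cannot simply reduce to working with bounded pieces. I would resolve this by extending each $\{\psi^{(i)}_W\}$ from $U$ to a controlled neighborhood of $U$, precomposing it with a coarse nearest-point retraction onto $U$ (which displaces points by a bounded amount comparable to $R_{i-1}$); this enlarges the effective scale in the level-$i$ estimate from $R$ to roughly $R+R_{i-1}$, which is absorbed in turn by choosing $R_i$ larger still. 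Uniform boundedness of $\mathcal{V}_n$ enters exactly here: along any branch the last factor is supported within a bounded distance (comparable to $R_{n-1}+R_n$) of a leaf of diameter $\le D$, so the branch product is supported in a set of diameter at most a constant $S(R,\varepsilon)$ depending only on $R$, $\varepsilon$, and the chosen sequence --- this is the uniform bound on supports required by Theorem 3.2. What remains --- verifying that the telescoping estimate survives the retractions, that the retracted families still sum to $1$ on the relevant neighborhoods, and that Lemma 3 applies inside each node with the stated constants --- is routine bookkeeping.
\end{proof}
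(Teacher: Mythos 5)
Your proposal is correct in its essentials and follows the same route as the paper: the Willett--Ozawa characterization, decomposition scales $R_i$ chosen so that $2\bigl(1-s(R_i)^{-cR/R_i}\bigr)$ is summably small (exactly where subexponentiality enters, and independent of how long the decomposition turns out to be), thickened covers of controlled multiplicity and Lebesgue number feeding Lemma 3 at each level, products of partitions of unity along branches with a telescoping estimate, and the uniformly bounded leaves supplying the support bound. The one divergence, your coarse nearest-point retraction for the gluing step, is an unnecessary detour: since each level-$(i-1)$ bump function produced by Lemma 3 is already supported inside its node $U$, the branch product simply extends by zero off $U$, and for $x\in U$, $y\notin U$ the resulting jump $\sum_{W}\psi^{(i-1)}_U(x)\,\psi^{(i)}_W(x)=\psi^{(i-1)}_U(x)=|\psi^{(i-1)}_U(x)-\psi^{(i-1)}_U(y)|$ is absorbed into the previous level's $\ell_1$-variation, which is precisely how the paper closes the gap you flagged without any rescaling of the level-$i$ estimate.
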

\begin{proof}
Let $X$ be a bounded geometry metric space with subexponential decomposition growth function $s : \mathbb{R^+} \to \mathbb{N}$. We construct functions $f_x^n$ as in Theorem 3.2. Fix $n$. For each $i \geq 1$ let $R_i \geq 2n + 1$ be such that \newline $2(1 - s(3R_i)^{-2n/R_i}) \leq \frac{1}{2^i n}$. Such $R_i$ exist since $s$ has subexponential growth. Now let
$$X \stackrel{3R_{1},s(3R_1)}{\longrightarrow} \mathcal{V}_{1} \stackrel{3R_{2},s(3R_2)}{\longrightarrow} \mathcal{V}_2 \stackrel{3R_{3},s(3R_3)}{\longrightarrow} \dots \stackrel{3R_{m},s(3R_m)}{\longrightarrow} \mathcal{V}_l$$
be a decomposition with $\mathcal{V}_l$ uniformly bounded. We need to thicken the sets in the decomposition into open sets. For each $V \in {V}_i$, $i \leq l - 1$, let $\mathcal{V}_V \subset \mathcal{V}_{i+1}$ be such that 
$$V \stackrel{3R_{i+1}, \, s(3R_{i+1})}{\longrightarrow} \mathcal{V}_{V}.$$
We may assume if $V_1 \neq V_2$ then $\mathcal{V}_{V_1} \cap \mathcal{V}_{V_2} = \emptyset$, by taking a disjoint union if necessary. Let
$$\mathcal{U}_1 = \{ B(V, R_1) : V \in \mathcal{V}_1 \}.$$
Having defined $\mathcal{U}_i$, $i \leq l - 1$, define $\mathcal{U}_U$ for each $U \in \mathcal{U}_i$ as
$$\mathcal{U}_U = \{ B(V', R_{i+1}) \cap U : V' \in \mathcal{V}_V \}$$
where $U$ is the thickening of $V$, i.e. $U = B(V, R_i)$. Now define
$$\mathcal{U}_{i+1} = \bigcup_{U \in \mathcal{U}_{i}} \mathcal{U}_U.$$
Then for each $0 \leq i \leq l - 1$ and $U \in \mathcal{U}_i$, $\mathcal{U}_U \subset \mathcal{U}_{i+1}$ is an open over of $U$ with Lebesgue number $\geq R_{i+1}$ and multiplicity $\leq s(3R_{i+1})$.
\newline
Let $g^1: X \to \ell_1( \mathcal{U}_1 )$ be the function given by Lemma 3.
Now suppose we have defined $g^i : X \to \ell_1(\mathcal{U}_i)$, and if $d(x,y) \leq n $ then
$$\|  g^i_x - g^i_y \| \leq \sum_{j = 1}^i 2(1 - s(R_{j})^{-4n/R_{j}}).$$
We want to construct $g^{i+1}: X \to \ell_1( \mathcal{U}_{i+1} )$. For each $U \in \mathcal{U}_i$, let
$$S_x(k,U) = \{ U' \in \mathcal{U}_U : B(x,k) \cap U \subset U' \}$$
For each $U \in \mathcal{U}_i$, apply lemma 1.5 to get a function $g^U : U \to \ell_1(\mathcal{U}_U)$ which for $d(x,y) \leq n$ has
$$\| g_x^U - g_y^U \| \leq 2(1 - s(R_{i+1})^{-2n/R_{i+1}}).$$
We now define $g^{i+1}_x \in \ell_1(\mathcal{U}_{i+1})$. If $U \in \mathcal{U}_i$ with $x \notin U$, let $g^{i+1}_x(U') = 0$ for every $U' \in \mathcal{U}_U$. Now assume $U \in \mathcal{U}_i$ with $x \in U$, and for each $U' \in \mathcal{U}_U$ define
$$g_x^{i+1}(U') = g_x^i(U) \cdot g_x^U(U')$$
It follows from $\| g^i_x \| = 1$ and $\| g_x^U \| = 1$ that $\| g_x^{i+1} \| = 1$. If $y \notin U$, then for any $U' \in \mathcal{U}_U$
$$| g^{i+1}_x(U') - g^{i+1}_y(U')| = |g^{i+1}_x(U')|$$
Now assume $x,y \in U$ and $U' \in \mathcal{U}_U$. Then
\begin{equation}
\begin{aligned}
| g^{i+1}_x(U') - g^{i+1}_y(U')| &= |g_x^i(U) \cdot g_x^U(U') - g_y^i(U) \cdot g_y^U(U')| \\
& \leq |g_x^i(U) \cdot g_x^U(U') - g_x^i(U) \cdot g_y^U(U')| + |g_x^i(U) \cdot g_y^U(U') - g_y^i(U) \cdot g_y^U(U')|\\
& = |g^i_x(U)||g_x^U(U') - g_y^U(U')| + |g_y^U(U')||g_x^i(U) - g_y^i(U)|\notag
\end{aligned}
\end{equation}
Hence if $d(x,y) \leq n$ then
\begin{equation}
\begin{aligned}
\sum_{U' \in \mathcal{U}_U} | g^{i+1}_x(U') - g^{i+1}_y(U')| & \leq |g^i_x(U)| \cdot \| g^U_x - g^U_y\| + |g^i_x(U) - g^i_y(U)| \\
& \leq |g^i_x(U)| \cdot 2(1 - s(R_{i+1})^{-4n/R_{i+1}}) + |g^i_x(U) - g^i_y(U)| \notag
\end{aligned}
\end{equation}
If $x,y \notin U$, then the above is obvious since each $ | g^{i+1}_x(U') - g^{i+1}_y(U')| = 0$. If $x \in U$ but $y \notin U$, then the above is also true:
\begin{equation}
\begin{aligned}
\sum_{U' \in \mathcal{U}_U} | g^{i+1}_x(U') - g^{i+1}_y(U')| &= \sum_{U' \in \mathcal{U}_U} | g^{i+1}_x(U')| \\
& = \sum_{U' \in \mathcal{U}_U}  | g_x^i(U) \cdot g_x^U(U'| \\
& = | g_x^i(U) | = |g_x^i(U) - g_y^i(U) | \\
& \leq |g^i_x(U)| \cdot 2(1 - s(R_{i+1})^{-2n/R_{i+1}}) + |g^i_x(U) - g^i_y(U)| \notag
\end{aligned}
\end{equation}
Now summing over all $U \in \mathcal{U}_i$ we obtain
\begin{equation}
\begin{aligned}
\| g^{i+1}_x - g^{i+1}_y \| &= \sum_{U \in \mathcal{U}_i} \sum_{U' \in \mathcal{U}_U} |g^{i+1}_x(U') - g^{i+1}_y(U')| \\
& \leq \sum_{U \in \mathcal{U}_i} |g^i_x(U)| \cdot 2(1 - s(R_{i+1})^{-2n/R_{i+1}}) + |g^i_x(U) - g^i_y(U)| \\
& = 2(1 - s(R_{i+1})^{-2n/R_{i+1}}) + \| g^i_x - g^i_y \| \\
& = \sum_{j = 1}^{i+1} 2(1 - s(R_{j})^{-2n/R_{j}}) \notag
\end{aligned}
\end{equation}
Finally we obtain $g^l : X \to \ell_1(\mathcal{U}_l)$. For each $U \in \mathcal{U}_l$ let $x_U \in U$. Then the map $U \mapsto x_U$ induces a map $\pi : \ell_1(\mathcal{U}_l) \to \ell_1(X)$ with $\| \pi( h_1 ) - \pi(h_2) \| \leq \| h_1 - h_2 \|$ for $h_1, h_2 \in \ell_1(\mathcal{U})$. Let $f^n : X \to \ell_1(X)$ be defined by $f^n_x = \pi(g^l_x)$. Then if $d(x,y) \leq n$ then
$$\| f^n_x - f^n_y \| \leq  \| g^l_x - g^l_y \| \leq \sum_{i = 1}^{l} 2(1 - s(R_{i})^{-2n/R_{i}}) \leq \sum_{i=1}^l \frac{1}{2^in} \leq 1/n$$
Hence the $f^n$ satisfy (3) of Theorem 3.2. $\mathcal{U}_l$ is uniformly bounded and for each $U \in \mathcal{U}_l$ we have $f_x^n(x_U) \neq 0$ if and only if $x \in U$. Hence $f_x^n = 0$ outside $B(x, \mesh(\mathcal{U}_l))$, so (2) of Theorem 3.2 is satisfied. (1) is satisfied since $\| f^n \| = \| g^l \| = 1$. We conclude that $X$ has property A.
\end{proof}

\section{Preservation by Metric and Group Constructions}
\label{pres}

\begin{theorem}
If $s$ is a decomposition complexity growth function for $X$, and $t$ is a decomposition complexity growth function for $Y$, then $s \cdot t$ is a decomposition complexity growth function for $X \times Y$.
\end{theorem}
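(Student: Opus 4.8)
The plan is to handle the two factors one after the other: first decompose $X\times Y$ in the $X$-direction using the hypothesis on $s$, and then, on each resulting piece (which is now bounded in the $X$-coordinate), decompose in the $Y$-direction using the hypothesis on $t$. Equip $X\times Y$ with the metric $d\big((x,y),(x',y')\big)=\max\{d_X(x,x'),d_Y(y,y')\}$; any other standard product metric is bi-Lipschitz equivalent to this one, so by Theorem 3.1 nothing in the conclusion depends on the choice. The one elementary fact used throughout is a statement about cylinders: if $\mathcal{A}$ is an $R$-disjoint family of subsets of $X$ and $B\subset Y$ is arbitrary, then $\{A\times B:A\in\mathcal{A}\}$ is $R$-disjoint in $X\times Y$, because two distinct such cylinders already differ by more than $R$ in the $X$-coordinate; symmetrically with $X$ and $Y$ exchanged; and $\operatorname{diam}(A\times B)=\max\{\operatorname{diam}A,\operatorname{diam}B\}$, so a family of cylinders is uniformly bounded as soon as its two factor families are.

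Now fix an arbitrary non-decreasing sequence $R_1\le R_2\le\cdots$ of positive reals. Applying the hypothesis that $s$ is a decomposition complexity growth function for $X$ to this sequence produces an integer $p$ and families $\mathcal{V}_1,\dots,\mathcal{V}_p$ of subsets of $X$, with $\mathcal{V}_p$ uniformly bounded, realizing the usual $X$-chain; taking products with $Y$ and using the cylinder fact at each arrow to keep the subfamilies $R_i$-disjoint turns this into
$$X\times Y\stackrel{R_1,s(R_1)}{\longrightarrow}\mathcal{V}_1\times Y\stackrel{R_2,s(R_2)}{\longrightarrow}\cdots\stackrel{R_p,s(R_p)}{\longrightarrow}\mathcal{V}_p\times Y,$$
where $\mathcal{V}_i\times Y:=\{V\times Y:V\in\mathcal{V}_i\}$. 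Next apply the hypothesis that $t$ is a decomposition complexity growth function for $Y$ to the shifted sequence $R_{p+1}\le R_{p+2}\le\cdots$, obtaining an integer $q$ and families $\mathcal{B}_1,\dots,\mathcal{B}_q$ of subsets of $Y$ with $\mathcal{B}_q$ uniformly bounded. Decomposing the $Y$-coordinate of each cylinder $V\times Y$, $V\in\mathcal{V}_p$, over $\mathcal{B}_1,\dots,\mathcal{B}_q$ in turn (again cylinders keep each subfamily $R_i$-disjoint) extends the chain by
$$\mathcal{V}_p\times Y\stackrel{R_{p+1},t(R_{p+1})}{\longrightarrow}\mathcal{V}_p\times\mathcal{B}_1\stackrel{R_{p+2},t(R_{p+2})}{\longrightarrow}\cdots\stackrel{R_{p+q},t(R_{p+q})}{\longrightarrow}\mathcal{V}_p\times\mathcal{B}_q,$$
with $\mathcal{V}_p\times\mathcal{B}_j:=\{V\times B:V\in\mathcal{V}_p,\ B\in\mathcal{B}_j\}$; the last family is uniformly bounded since both $\mathcal{V}_p$ and $\mathcal{B}_q$ are.

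Concatenating, with $n=p+q$, gives a decomposition of $X\times Y$ along the scales $R_1,\dots,R_n$ whose final family is uniformly bounded and whose multiplicity at position $i$ is $s(R_i)$ for $i\le p$ and $t(R_i)$ for $p<i\le n$. It remains to replace these by the prescribed multiplicity $(s\cdot t)(R_i)=s(R_i)t(R_i)$, which is legitimate because $s(R)\ge 1$ and $t(R)\ge 1$ for every $R>0$ — applying either hypothesis to a constant sequence would otherwise force $X$ or $Y$ to be empty, and the theorem is vacuous if a factor is empty — so each actual multiplicity is $\le(s\cdot t)(R_i)$, and an $(R,m)$-decomposition is in particular an $(R,m')$-decomposition whenever $m'\ge m$ (pad with empty subfamilies). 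Since the sequence was arbitrary and $s\cdot t$ is patently a non-decreasing $\mathbb{N}$-valued function, $s\cdot t$ is a decomposition complexity growth function for $X\times Y$. Beyond the cylinder fact this is bookkeeping; the only delicate point is making the scales line up, which is why $Y$ is processed with the sequence shifted past the $p$ scales already spent on $X$. A simultaneous variant, doing one step in each factor at once so that position $i$ genuinely carries multiplicity $s(R_i)t(R_i)$, works equally well and is what the product form of the statement suggests.
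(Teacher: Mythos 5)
Your proof is correct, but it takes a genuinely different route from the paper's. The paper works with the sum metric on $X\times Y$, obtains decomposition chains for $X$ and for $Y$ along the \emph{same} sequence $R_1\le R_2\le\cdots$, pads the shorter chain with trivial steps so both have length $n$, and then forms the ``simultaneous'' product chain $\mathcal{U}_i\times\mathcal{V}_i$, in which the $i$-th arrow genuinely carries multiplicity $s(R_i)\cdot t(R_i)$ because an $(s(R_i)$-colored$)\times(t(R_i)$-colored$)$ grid of cylinder subfamilies is needed at each step --- exactly the variant you mention in your last sentence. Your sequential argument (spend the first $p$ scales on the $X$-coordinate, then the shifted scales $R_{p+1},\dots,R_{p+q}$ on the $Y$-coordinate of the resulting cylinders) avoids the padding trick entirely and in fact proves something slightly stronger: each arrow of your chain has multiplicity $s(R_i)$ or $t(R_i)$, so $\max\{s,t\}$ is already a decomposition complexity growth function for $X\times Y$, from which $s\cdot t$ follows by your (correct) observation that $s,t\ge 1$ on nonempty spaces and that multiplicities may be inflated. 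The small costs of your approach are that you must justify the monotonicity/padding step and the positivity of $s$ and $t$, and that your appeal to Theorem 3.1 to change between the max and sum metrics literally yields only a function of the same growth type rather than $s\cdot t$ itself; the latter is harmless here (the identity map from the max metric to the sum metric preserves $R$-disjointness and only doubles diameters, so the same chain works verbatim), but it would be cleaner to say so, or simply to fix the sum metric as the paper does.
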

\begin{proof}
We equip $X\times Y$ with the metric $d((x_1,y_1),(x_2,y_2)) = d_X(x_1, x_2) + d_Y(y_1, y_2).$ Let $R_1 \leq R_2 \leq R_3 \leq \dots$ be a sequence of positive reals, and let $(\mathcal{U}_i)_{i=1}^n, (\mathcal{V}_i)_{i=1}^m$ be such that $\mathcal{U}_n$ and $\mathcal{V}_m$ are uniformly bounded, and there are decompositions
$$X \stackrel{R_1,s(R_1)}{\longrightarrow} \mathcal{U}_{1} \stackrel{R_{2},s(R_2)}{\longrightarrow} \mathcal{U}_2 \stackrel{R_{3},s(R_3)}{\longrightarrow} \dots \stackrel{R_{n},s(R_n)}{\longrightarrow} \mathcal{U}_n$$
and
$$Y \stackrel{R_1,t(R_1)}{\longrightarrow} \mathcal{V}_{1} \stackrel{R_2,t(R_2)}{\longrightarrow} \mathcal{V}_2 \stackrel{R_3,t(R_3)}{\longrightarrow} \dots \stackrel{R_m,t(R_m)}{\longrightarrow} \mathcal{V}_m.$$
If $n<m$, then we can add $m-n$ trivial decomposition steps in the decomposition of $X$, where each $\mathcal{U}_{n+l} = \mathcal{U}_n$, and each $U \in \mathcal{U}_{n+l}$ decomposes into a single family $\{ U \}$. So we assume without loss of generality that $n=m$. For metric families $\mathcal{U}, \mathcal{V}$, define
$$\mathcal{U} \times \mathcal{V} = \{ U \times V : U \in \mathcal{U}, V \in \mathcal{V} \}.$$
Now we show that each $\mathcal{U}_i \times \mathcal{V}_i$ $(R_i, s(R_i) \cdot t(R_i))$-decomposes over $\mathcal{U}_{i+1} \times \mathcal{V}_{i+1}$. Let $i \leq n$, and let $U \in \mathcal{U}_i$ and $V \in \mathcal{V}_i$. Then there exist subfamilies of $\mathcal{U}_{i+1}$, say $(\mathcal{U}'_j)_{j=1}^{s(R_i)}$, and subfamilies of $\mathcal{V}_{i+1}$ $(\mathcal{V}'_k)_{k=1}^{t(R_i)}$, such that each $\mathcal{U}'_j$ is $R_i$-disjoint, each $\mathcal{V}'_k$ is $R_i$-disjoint, the $\mathcal{U}_j'$ form a cover of $X$, and the $\mathcal{V}_k'$ form a cover of $Y$. Then $(\mathcal{U}'_j \times \mathcal{V}'_k)_{j,k}$ form a cover of $X \times Y$ consisting of $s(R_i) \cdot t(R_i)$ families, and each $\mathcal{U}'_j \times \mathcal{V}'_k$ is $R_i$-disjoint. Hence we have a decomposition
$$X \stackrel{R_1,s(R_1)\cdot t(R_1)}{\longrightarrow} \mathcal{U}_{1} \times \mathcal{V}_1 \stackrel{R_{2},s(R_2)\cdot t(R_2)}{\longrightarrow} \mathcal{U}_2 \times \mathcal{V}_2 \stackrel{R_{3},s(R_3) \cdot t(R_3)}{\longrightarrow} \dots \stackrel{R_{n},s(R_n) \cdot t(R_n)}{\longrightarrow} \mathcal{U}_n \times \mathcal{V}_n$$
and clearly $\mathcal{U}_n \times \mathcal{V}_n$ is uniformly bounded, since $\mathcal{U}_n$ and $\mathcal{V}_n$ are. Hence $s \cdot t$ is a decomposition complexity growth function for $X \times Y$.
\end{proof}
\begin{corollary}
If $X$ and $Y$ both have subexponential (resp. constant, polynomial) decomposition growth, $X \times Y$ has subexponential (resp. constant, polynomial) decomposition growth.
\end{corollary}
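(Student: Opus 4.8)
The plan is to read the statement off from Theorem 4.1. Suppose $X$ has subexponential (resp.\ constant, polynomial) decomposition growth, witnessed by a decomposition complexity growth function $s$ for $X$ with $s\sim\sigma$ for some subexponential (resp.\ constant, polynomial) function $\sigma$; similarly fix a decomposition complexity growth function $t$ for $Y$ with $t\sim\tau$ for a function $\tau$ of the same type. By Theorem 4.1 the pointwise product $s\cdot t$ is a decomposition complexity growth function for $X\times Y$, so it suffices to show that $s\cdot t$ again has the relevant growth type; then $s\cdot t$ itself witnesses the conclusion for $X\times Y$. I would handle the three growth types separately, using that $\sim$ is an equivalence relation on non-decreasing $\mathbb{N}$-valued functions.

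For the subexponential case, the remark following the definition of growth gives that $s\sim\sigma$ with $\sigma$ subexponential forces $s$ to be subexponential, and likewise $t$ is subexponential. Assuming $X,Y\neq\emptyset$ (the empty case is trivial) we have $s,t\geq 1$, so
$$\lim_{x\to\infty}\sqrt[x]{s(x)\,t(x)}=\Big(\lim_{x\to\infty}\sqrt[x]{s(x)}\Big)\Big(\lim_{x\to\infty}\sqrt[x]{t(x)}\Big)=1,$$
and $s\cdot t$ is subexponential; no separate appeal to $\sim$ is needed here. For the constant case, $s\sim\sigma$ with $\sigma$ constant simply says $s$ is bounded (a non-decreasing $\mathbb{N}$-valued function), and the same holds for $t$, so $s\cdot t$ is bounded and hence of constant growth.

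The polynomial case needs a little more bookkeeping, and this is where I expect the only real friction. First I would observe that for a non-decreasing $\mathbb{N}$-valued function $s$, having polynomial growth of degree $d\geq 1$ is equivalent to a two-sided power bound: there are constants $0<C_1\leq C_2$ and $x_0>0$ with $C_1 x^{d}\leq s(x)\leq C_2 x^{d}$ for all $x\geq x_0$ (the degree $0$ case being boundedness, already treated). Unwinding the definition of $\sim$ yields such bounds, the additive constants affecting only the bounded range $x<x_0$. Given two-sided power bounds $C_1 x^d\leq s(x)\leq C_2 x^d$ and $D_1 x^e\leq t(x)\leq D_2 x^e$ for all large $x$, multiplying them gives $C_1D_1\,x^{d+e}\leq s(x)t(x)\leq C_2D_2\,x^{d+e}$ for all large $x$, which is a two-sided power bound of degree $d+e$, so $s\cdot t$ has polynomial growth. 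In all three cases $s\cdot t$ is a decomposition complexity growth function for $X\times Y$ of the desired type, which is the corollary. The only obstacle throughout is the routine absorption of the additive constants from the definition of $\sim$ on bounded initial intervals; beyond Theorem 4.1, no geometric input is needed.
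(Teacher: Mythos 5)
Your proposal is correct and matches the paper's (implicit) argument: the paper states this corollary without proof as an immediate consequence of Theorem 4.1, relying on exactly the fact you verify, namely that the pointwise product of two subexponential (resp.\ bounded, polynomially bounded) non-decreasing functions is again of the same growth type. Your careful handling of the $\sim$ relation in the polynomial case is a reasonable filling-in of details the paper leaves to the reader.
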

Toward a group extension stability theorem for decomposition complexity growth, we follow the approach of the fibering theorem for finite asymptotic dimension in \cite{BD1}. Given an action of a group $G$ on a metric space $X$ with $x\in X$, let $\stab_R(x) = \{ g \in G : d(gx, x) \leq R \}$, called the $R$-stabilizer of $x$.
\begin{theorem}
Suppose $G$ is a finitely generated group acting transitively by isometries on a metric space $X$, $X$ has a decomposition complexity growth function $s\geq 2$, and for every $R > 0$ the $R$-stabilizers of the action of $G$ on $X$ have straight finite decomposition complexity. Then there is a function $t \sim s$ such that $t$ is a decomposition complexity growth function for $G$.
\end{theorem}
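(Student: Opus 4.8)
The plan is to adapt the Bell--Dranishnikov fibering argument of \cite{BD1} to decomposition complexity growth, using the orbit map $p\colon G\to X$, $p(g)=gx_0$, for a fixed basepoint $x_0\in X$. Since $G$ is finitely generated and acts by isometries, $p$ is $L$-Lipschitz for some $L\geq 1$ depending only on the generating set and $x_0$. Now $p$ need not be a quasi-isometric embedding, but the parts of Lemmas 1 and 2 that we use only invoke the inequality $d(p(g),p(h))\leq Ld(g,h)$: the preimage of an $LR$-disjoint family of subsets of $X$ is an $R$-disjoint family of subsets of $G$, and hence the preimage of a decomposition $\mathcal{U}\stackrel{LR,k}{\longrightarrow}\mathcal{V}$ of families in $X$ is a decomposition $p^{-1}(\mathcal{U})\stackrel{R,k}{\longrightarrow}p^{-1}(\mathcal{V})$ of families in $G$. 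Define $t\colon\mathbb{R}^+\to\mathbb{N}$ by $t(R)=s(LR)$; then $t$ is non-decreasing, $t\sim s$, and $t\geq 2$ because $s\geq 2$.

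Let $R_1\leq R_2\leq\cdots$ be a sequence of positive reals. First apply the decomposition complexity growth function $s$ of $X$ to the rescaled sequence $LR_1\leq LR_2\leq\cdots$, obtaining a positive integer $n$ and metric families $\mathcal{V}_1,\dots,\mathcal{V}_n$ of subsets of $X$ with
$$X\stackrel{LR_1,s(LR_1)}{\longrightarrow}\mathcal{V}_1\stackrel{LR_2,s(LR_2)}{\longrightarrow}\cdots\stackrel{LR_n,s(LR_n)}{\longrightarrow}\mathcal{V}_n$$
and $\mathcal{V}_n$ uniformly bounded, say $\mesh(\mathcal{V}_n)\leq D$. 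Pulling back along $p$ (using the remarks above and $s(LR_i)=t(R_i)$) yields
$$G\stackrel{R_1,t(R_1)}{\longrightarrow}p^{-1}(\mathcal{V}_1)\stackrel{R_2,t(R_2)}{\longrightarrow}\cdots\stackrel{R_n,t(R_n)}{\longrightarrow}p^{-1}(\mathcal{V}_n).$$

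The key point is to decompose the fiber family $p^{-1}(\mathcal{V}_n)$. For each nonempty $V\in\mathcal{V}_n$ fix $v\in V$ and, using transitivity, some $g_V\in G$ with $g_Vx_0=v$. If $gx_0\in V$ then $d(g_V^{-1}gx_0,x_0)=d(gx_0,v)\leq D$, so $g_V^{-1}g\in H:=\stab_D(x_0)$; hence $p^{-1}(V)\subseteq g_VH$, and left translation by $g_V^{-1}$ carries $p^{-1}(V)$ isometrically onto a subset of $H$. By hypothesis $H$ has straight finite decomposition complexity, so applied to the tail sequence $R_{n+1}\leq R_{n+2}\leq\cdots$ it gives a positive integer $N$ and families $\mathcal{H}_1,\dots,\mathcal{H}_N$ of subsets of $H$ with
$$H\stackrel{R_{n+1},2}{\longrightarrow}\mathcal{H}_1\stackrel{R_{n+2},2}{\longrightarrow}\cdots\stackrel{R_{n+N},2}{\longrightarrow}\mathcal{H}_N$$
and $\mathcal{H}_N$ uniformly bounded. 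Restricting each $\mathcal{H}_j$ to the subset $g_V^{-1}p^{-1}(V)\subseteq H$ preserves all disjointness and covering conditions, and translating back by $g_V$ decomposes $p^{-1}(V)$; forming the disjoint union over $V\in\mathcal{V}_n$ produces families $\mathcal{W}_{n+1},\dots,\mathcal{W}_{n+N}$ with
$$p^{-1}(\mathcal{V}_n)\stackrel{R_{n+1},2}{\longrightarrow}\mathcal{W}_{n+1}\stackrel{R_{n+2},2}{\longrightarrow}\cdots\stackrel{R_{n+N},2}{\longrightarrow}\mathcal{W}_{n+N}$$
and $\mathcal{W}_{n+N}$ uniformly bounded. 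Concatenating the two chains gives a finite decomposition of $G$ over a uniformly bounded family in which step $i\leq n$ has multiplicity $t(R_i)$ and each of the last $N$ steps has multiplicity $2\leq t(R_{n+j})$; padding those steps with empty subfamilies, $t$ is a decomposition complexity growth function for $G$, and $t\sim s$.

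The genuinely new point, and the main obstacle beyond bookkeeping, is that the fibers of the orbit map over a uniformly bounded family all lie inside left translates of a \emph{single} $R$-stabilizer, where the radius $R=D$ is the mesh of $\mathcal{V}_n$ and is therefore unknown until the dimension-growth stage has been carried out; this is exactly why the hypothesis must demand sFDC of the $R$-stabilizers for \emph{every} $R$. One then has to verify that the sFDC of this one stabilizer suffices to decompose the entire fiber family $p^{-1}(\mathcal{V}_n)$ uniformly, via restriction of the single sFDC tower to subsets together with left translation (which is a word-metric isometry), and that the multiplicity-$2$ steps it contributes are harmless precisely because $s\geq 2$, so that the exponent of the combined decomposition at each scale stays bounded by the rescaled growth function $t$.
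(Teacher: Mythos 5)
Your proposal is correct and follows essentially the same route as the paper: the Lipschitz orbit map $g\mapsto gx_0$, pullback of the rescaled decomposition of $X$, identification of each fiber over $\mathcal{V}_n$ with a subset of the single $D$-stabilizer via left translation, and a final sFDC tower on that stabilizer, yielding $t(R)=s(LR)$. You are in fact slightly more careful than the paper on two points -- noting that only the Lipschitz upper bound is needed for the pullback lemmas, and that $s\geq 2$ is what makes the multiplicity-$2$ tail steps admissible.
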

\begin{proof}
Fix a finite, symmetric generating set $S$ of $G$ and consider the word length metric of $G$ with respect to $S$, and fix a base point $x_0 \in X$. Let $\pi : G \to X$ be defined by $\pi(g) = gx_0$. If $L = \max{ \{d(x_0, \sigma x_0) : \sigma \in S \} }$,
then for any $g \in G$
\begin{equation}
\begin{aligned}
d(\pi(g), \pi(h)) &= d(gx_0, hx_0) \\
& = d(h^{-1}gx_0, x_0)\\
& \leq L \cdot |h^{-1}g|_S\\
& = Ld(g,h)\notag
\end{aligned}
\end{equation}
i.e. $\pi$ is $L$-Lipschitz.

Now Let $R_1 \leq R_2 \leq R_3 \leq \dots$ be a sequence of positive reals. Then we have a decomposition
$$X \stackrel{LR_{1},s(LR_1)}{\longrightarrow} \mathcal{V}_{1} \stackrel{LR_{2},s(LR_2)}{\longrightarrow} \mathcal{V}_2 \stackrel{LR_{3},s(LR_3)}{\longrightarrow} \dots \stackrel{LR_{n},s(LR_n)}{\longrightarrow} \mathcal{V}_n$$
with $\mathcal{V}_n$ uniformly bounded, and say $D = \mesh{\mathcal{V}_n}$. Then by Lemma 1 and 2, pulling the decomposition back by $\pi$ gives us a decomposition
$$G \stackrel{R_{1},s(LR_1)}{\longrightarrow} \pi^{-1}(\mathcal{V}_{1}) \stackrel{R_{2},s(LR_2)}{\longrightarrow} \pi^{-1}(\mathcal{V}_{2}) \stackrel{R_{3},s(LR_3)}{\longrightarrow} \dots \stackrel{R_{n},s(LR_n)}{\longrightarrow} \pi^{-1}(\mathcal{V}_{n})$$
but $\pi^{-1}(\mathcal{V}_n)$ is not uniformly bounded in general. So now we must decompose $\pi^{-1}(\mathcal{V}_n)$ into a uniformly bounded family, and we are done.

For each $U \in \pi^{-1}(\mathcal{V}_n)$, fix $g_U \in U$. Then for any $g \in U$, 
\begin{equation}
\begin{aligned}
d(g_U^{-1}gx_0, x_0) &= d(gx_0, g_Ux_0) \\
& = d(\pi(g), \pi(g_U))\\
& \leq D\notag
\end{aligned}
\end{equation}
since $\pi(U) \in \mathcal{V}_n$ has diameter $\leq D$. Hence $g_U^{-1}U \subset \stab_D(x_0)$. Therefore each $U \in \pi^{-1}(\mathcal{V}_n)$ is isometric to a subset of $\stab_D(x_0)$ via multiplication on the left by $g_U$. But $\stab_D(x_0)$ has $sFDC$ by assumption, so we have a decomposition
$$\stab_D(x_0) \stackrel{R_{n+1},2}{\longrightarrow} \mathcal{U}_{1} \stackrel{R_{n+2},2}{\longrightarrow} \mathcal{U}_{2} \stackrel{R_{n+3},2}{\longrightarrow} \dots \stackrel{R_{n+k},2}{\longrightarrow} \mathcal{U}_{k}$$
for some $k$, such that $\mathcal{U}_k$ is uniformly bounded, say with $\mesh(\mathcal{U}_k) = C$.

Now we can pull back each $\mathcal{U}_i$ by multiplication by $g_U^{-1}$ to get a decomposition of each $U \in \pi^{-1}(\mathcal{V}_n)$: for each $U \in \pi^{-1}(\mathcal{V}_n)$ and $1 \leq i \leq k$, we define
$$\mathcal{W}_i^U = \{ (g_U^{-1} U') \cap U : U' \in \mathcal{U}_i \}.$$
Then for each $U \in \pi^{-1}(\mathcal{V}_n)$ we have a decomposition
$$U \stackrel{R_{n+1},2}{\longrightarrow} \mathcal{W}_{1}^U \stackrel{R_{n+2},2}{\longrightarrow} \mathcal{W}_{2}^U \stackrel{R_{n+3},2}{\longrightarrow} \dots \stackrel{R_{n+k},2}{\longrightarrow} \mathcal{W}_{k}^U$$
with $\mathcal{W}_k^U$ uniformly bounded with $\mesh(\mathcal{W}_k^U) = C$. Hence if we define for each $1 \leq i \leq k$
$$\mathcal{W}_i = \bigcup_{U \in \pi^{-1}(\mathcal{V}_n)} \mathcal{W}_i^U$$
we have a decomposition
$$\pi^{-1}(\mathcal{V}_n) \stackrel{R_{n+1},2}{\longrightarrow} \mathcal{W}_{1} \stackrel{R_{n+2},2}{\longrightarrow} \mathcal{W}_{2} \stackrel{R_{n+3},2}{\longrightarrow} \dots \stackrel{R_{n+k},2}{\longrightarrow} \mathcal{W}_{k}$$
with $\mesh(\mathcal{W}_k) = C$. Append this to the decomposition of $G$ ending in $\pi^{-1}(\mathcal{V}_n)$ above, and we have shown that the function $t$ defined by $t(R) = s(LR)$ is a decomposition complexity growth function for $G$.
\end{proof}

\begin{theorem}
If
$1 \rightarrow K \rightarrow G \rightarrow H \rightarrow 1$
is a short exact sequence of groups such that $K$ is a countable group with straight finite decomposition complexity, $G$ is finitely generated, and $H$ has decomposition complexity growth function $s$, then $G$ has a decomposition complexity growth function $t \sim s$.
\end{theorem}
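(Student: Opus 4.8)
The plan is to deduce this from Theorem 4.3 by exhibiting $G$ as acting on $H$ with $R$-stabilizers that are straight finitely decomposable. Let $\rho\colon G\to H$ be the quotient map. Since $H$ is a quotient of the finitely generated group $G$ it is finitely generated, so I would fix a finite symmetric generating set $S$ of $G$, put $\bar S=\rho(S)$, and give $H$ the word metric for $\bar S$. Then $G$ acts on $H$ by $g\cdot h=\rho(g)h$, and this action is transitive (choose $g$ with $\rho(g)=h'h^{-1}$) and by isometries (left translation in $H$ preserves the word metric). A decomposition complexity growth function may always be increased, and $\max(s,2)\sim s$, so I may assume $s\geq 2$, meeting the hypothesis of Theorem 4.3. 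It then remains only to check that for each $R>0$ the stabilizer $\stab_R(e_H)$ has straight finite decomposition complexity; this suffices because the proof of Theorem 4.3 uses only the stabilizer of the chosen basepoint, and by normality of $K$ all $R$-stabilizers are handled identically.

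The crux is to recognize this stabilizer. By definition $\stab_R(e_H)=\{g\in G:d_H(\rho(g),e_H)\leq R\}=\rho^{-1}(B_H(e_H,R))$, and I claim $K\subseteq\stab_R(e_H)\subseteq N_R(K)$, the $R$-neighborhood of $K$ in $G$: the first inclusion holds because $\rho(K)=\{e_H\}$; for the second, given $g$ with $|\rho(g)|_{\bar S}\leq R$, I would lift an $\bar S$-word of length $m\leq R$ representing $\rho(g)$ to an $S$-word $w$ of length $m$ in $G$, so that $gw^{-1}\in K$ and, by left-invariance of the word metric of $G$, $d_G(g,gw^{-1})=|w|_S\leq R$. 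Hence the inclusion $(K,d_G|_K)\hookrightarrow\stab_R(e_H)$ is an isometric embedding with $R$-dense image, and therefore a quasi-isometry.

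Next I would observe that $(K,d_G|_K)$ has sFDC. This metric is left-invariant under $K$ and proper — each ball $K\cap B_G(k,r)$ is finite because $G$ has bounded geometry — so it is one of the proper left-invariant metrics on the countable group $K$ considered in the Preliminaries; since $K$ has sFDC by hypothesis, and sFDC is a coarse invariant (which is exactly what makes that hypothesis meaningful, and which in any case follows from the coarse-embedding analogues of Lemmas 1 and 2, proved just as stated), $(K,d_G|_K)$ has sFDC. As sFDC is a quasi-isometry invariant — apply Theorem 3.1 with the constant growth function $2$, using that a decomposition with bounded multiplicities can be split into one with multiplicity $2$ at each step — the quasi-isometric space $\stab_R(e_H)$ has sFDC as well. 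Theorem 4.3 now produces a decomposition complexity growth function $t$ for $G$ with $t\sim\max(s,2)\sim s$, which completes the proof.

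I expect the one delicate point to be keeping the two metrics on $K$ in alignment: the subspace metric $d_G|_K$, for which cosets of $K$ in $G$ and the stabilizers are visibly quasi-isometric copies of $K$, and the intrinsic proper left-invariant metric to which the hypothesis ``$K$ has sFDC'' refers; reconciling them is precisely the role played by the coarse-invariance of sFDC. If one wished to bypass Theorem 4.3, the same ingredients can be assembled directly in the style of its proof: pull the decomposition of $H$ back through the $1$-Lipschitz map $\rho$ using Lemmas 1 and 2, note that every member of the resulting terminal family is, after left translation by a suitable element, a subset of $\rho^{-1}(B_H(e_H,D))$ where $D$ bounds the diameters of the uniformly bounded terminal family for $H$, and append to the chain a multiplicity-$2$ straight decomposition of that set.
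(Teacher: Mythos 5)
Your proposal is correct and follows essentially the same route as the paper: realize $G$ as acting transitively by isometries on $H$ (with the word metric of the image generating set), identify $\stab_R(e_H)$ with the $R$-neighborhood of $K$ in $G$, use quasi-isometry (and coarse) invariance of sFDC to see the stabilizers have sFDC, and invoke Theorem 4.3. The only difference is that you prove directly the identification $\stab_R(e_H)=B_R(K)$ and its quasi-isometry with $K$, and you explicitly handle the $s\geq 2$ hypothesis and the two metrics on $K$, where the paper instead cites the proof of Theorem 7 of Bell--Dranishnikov and Dranishnikov--Zarichnyi's invariance result.
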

\begin{proof}
Let $G$ be equipped with the word length metric of a finite symmetric generating set $S$. If $\pi: G \to H$ is the quotient map, then $\pi(S)$ is a generating set for $H$, and $G$ acts transitively by isometries on $H$ via left multiplication, when $H$ is equipped with the word length metric of $\pi(S)$. Also, by the proof of Theorem 7 of \cite{BD1}, if $e \in H$ is the identity, then $\stab_R(e) = B_R(K) = \{ g \in G : d(g, K) \leq R \}$, and $B_R(K)$ is quasi-isometric to $K$ via contraction. Hence $\stab_R(e) = B_R(k)$ has straight finite decomposition complexity, since sFDC is a quasi-isometry invariant (\cite{DZ1}, Theorem 3.1). Therefore the action of $G$ on $H$ satisfies the conditions of Theorem 6, so $G$ has a decomposition complexity growth function $t \sim s$.
\end{proof}

\begin{corollary}
If
$1 \rightarrow K \rightarrow G \rightarrow H \rightarrow 1$
is a short exact sequence of groups such that $K$ is a countable group with straight finite decomposition complexity, $G$ is finitely generated, and $H$ has subexponential (constant, polynomial) decomposition growth, then $G$ has subexponential (constant, polynomial) decomposition growth.
\end{corollary}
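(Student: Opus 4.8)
The plan is to read this corollary off from Theorem 4.3 by chasing growth types. Assume $H$ has subexponential decomposition growth; by definition this means there is a decomposition complexity growth function $s$ for $H$ together with a subexponential function $\sigma$ such that $s \sim \sigma$. The hypotheses of Theorem 4.3 are then exactly met: $K$ is a countable group with sFDC, $G$ is finitely generated, and $s$ is a decomposition complexity growth function for $H$. So Theorem 4.3 produces a decomposition complexity growth function $t$ for $G$ with $t \sim s$.

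To finish I would verify that $t$ inherits the growth type of $\sigma$, i.e. that $t \sim \sigma$; this is just transitivity of the relation $\sim$. Concretely, if $t(a_1 x) \ge s(x) - c_1$, $s(a_1 x) \ge t(x) - c_1$, $s(a_2 x) \ge \sigma(x) - c_2$ and $\sigma(a_2 x) \ge s(x) - c_2$ for all $x > 0$, then — after enlarging $a_1$ and $a_2$ to be at least $1$, which is harmless since every function in sight is non-decreasing — one obtains $t(a_1 a_2 x) \ge s(a_2 x) - c_1 \ge \sigma(x) - (c_1 + c_2)$ and $\sigma(a_1 a_2 x) \ge s(a_1 x) - c_2 \ge t(x) - (c_1 + c_2)$, so $t \sim \sigma$ with constants $a_1 a_2$ and $c_1 + c_2$. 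Thus $t$ is a decomposition complexity growth function for $G$ of subexponential growth type, which is precisely the assertion that $G$ has subexponential decomposition growth.

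The constant and polynomial cases are handled verbatim, taking $\sigma$ to be a constant, resp.\ polynomial, function instead of a subexponential one; the transitivity computation above is insensitive to which class $\sigma$ lies in. I do not expect a genuine obstacle here: all the substance is already contained in Theorems 4.2 and 4.3, and what remains is only the bookkeeping needed to see that $\sim$ is an equivalence relation, the one mild point being that one should normalize the multiplicative constants to be $\ge 1$ before composing them.
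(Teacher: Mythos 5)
Your proposal is correct and is exactly the intended deduction: apply the extension theorem to get a decomposition complexity growth function $t \sim s$ for $G$, then use transitivity of $\sim$ (which the paper already notes preserves subexponentiality in Section 2) to conclude $t$ has the required growth type. The paper treats this as immediate and gives no written proof, so your careful check of the transitivity constants is just making explicit what the paper leaves implicit.
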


\section{An example}
\label{ex}
Subexponential decomposition growth is weaker than both finite decomposition complexity and subexponential asymptotic dimension growth, yet it still implies property A. We conclude with an example of a group with infinite asymptotic dimension and subexponential decomposition growth, but whose finite decomposition complexity status and asymptotic dimension growth are both unknown.

We recall the wreath product of groups. Let $G,H$ be finitely supported groups, and we  define $G \wr H$. Let $F(H;G)$ be the set of finitely supported functions $f:H \to G$. This set can be identified with $\bigoplus_{h \in H} G$. Then $H$ acts on $F(H;G)$ by translation: for every $f \in F(H;G)$ and $h, h' \in H$, we let
$$hf(h') = f(h^{-1}h').$$
With this action we define the semi-direct product $G \wr H = F(H;G) \rtimes H$, called the wreath product of $G$ and $H$. Now we are ready to construct our example.

Let $F_2$ be the free group on two generators, and let $G$ denote Grigorchuk's group, the group of intermediate (subexponential) volume growth introduced in \cite{GR1}. The techniques of \cite{D1} give an upper bound on the dimension growth of the wreath product $\mathbb{Z} \wr H$ of growth type the volume growth of $H$. However, $F_2$ has exponential volume growth, and we conjecture that the dimension growth of $\mathbb{Z} \wr F_2$ is in fact exponential. Also, it is unknown whether $G$ has finite decomposition complexity (\cite{GTY2}, Question 5.1.3). Hence it is unknown whether $(\mathbb{Z} \wr F_2) \times G$ has either finite decomposition complexity or subexponential dimension growth.
\begin{theorem}
$(\mathbb{Z} \wr F_2) \times G$ has subexponential decomposition growth.
\end{theorem}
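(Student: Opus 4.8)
The plan is to handle the two factors of $(\mathbb{Z}\wr F_2)\times G$ separately — showing each has subexponential decomposition growth — and then combine them using Corollary 2, which (via Theorem 4.1, together with the fact that a product of two subexponential functions is subexponential) asserts that a product of two spaces with subexponential decomposition growth again has subexponential decomposition growth.

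\emph{The factor $\mathbb{Z}\wr F_2$.} I would use the short exact sequence
$$1 \longrightarrow \bigoplus_{F_2}\mathbb{Z} \longrightarrow \mathbb{Z}\wr F_2 \longrightarrow F_2 \longrightarrow 1 .$$
The kernel $K=\bigoplus_{F_2}\mathbb{Z}$ is a countable abelian group, hence elementary amenable, hence has finite decomposition complexity by \cite{GTY2}, and therefore has straight finite decomposition complexity. The quotient $F_2$ is free, so it has asymptotic dimension $1$, hence constant asymptotic dimension growth; by the proof of Proposition 1 (or of Proposition 2) the constant function $s\equiv 2$ is then a decomposition complexity growth function for $F_2$, so $F_2$ has constant decomposition growth. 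Since $\mathbb{Z}\wr F_2$ is finitely generated, Corollary 3 applies and shows that $\mathbb{Z}\wr F_2$ has constant, in particular subexponential, decomposition growth. One should note that neither Corollary 3 nor Theorem 4.3 requires the kernel to be finitely generated: the theorem of \cite{GTY2} applies to every countable elementary amenable group, and the identification $\stab_R(e)=B_R(K)$, with $B_R(K)$ quasi-isometric to $K$ by contraction, holds for any normal subgroup $K$ of a finitely generated group.

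\emph{The factor $G$.} Here I would invoke the standard fact that a bounded geometry metric space of subexponential volume growth has subexponential asymptotic dimension growth. Concretely: fix $R>0$, pick a maximal $2R$-separated subset $Y\subseteq G$, and let $\{V_y\}_{y\in Y}$ be the associated Voronoi partition, so that $V_y\subseteq B(y,2R)$ and $\operatorname{diam}V_y\le 4R$. Give $Y$ the graph structure joining $y\ne y'$ whenever $d(y,y')\le 8R$; since the balls $B(y,R)$ ($y\in Y$) are pairwise disjoint and every neighbour of $y$ lies in $B(y,9R)$, this graph has maximum degree at most $|B(e,9R)|$, hence is properly colourable with at most $|B(e,9R)|+1$ colours. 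Grouping the Voronoi cells by colour gives $|B(e,9R)|+1$ families, each $4R$-disjoint (distinct same-coloured cells are centred more than $8R$ apart) and together forming a cover of $G$ by sets of diameter $\le 4R$. Thus $d_G(R)\le |B(e,9R)|+1$, and since $G$ has subexponential volume growth \cite{GR1}, the function $R\mapsto |B(e,9R)|+1$ is subexponential; hence $G$ has subexponential asymptotic dimension growth, and Proposition 2 yields that $G$ has subexponential decomposition growth.

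\emph{Combining.} Let $s_1$ and $s_2$ be subexponential decomposition complexity growth functions for $\mathbb{Z}\wr F_2$ and $G$ respectively. By Theorem 4.1, $s_1\cdot s_2$ is a decomposition complexity growth function for $(\mathbb{Z}\wr F_2)\times G$, and $\lim_{x\to\infty}\big(s_1(x)s_2(x)\big)^{1/x}=\big(\lim_x s_1(x)^{1/x}\big)\big(\lim_x s_2(x)^{1/x}\big)=1$, so $s_1\cdot s_2$ is subexponential; this is exactly the content of Corollary 2. The only ingredient not already available as a lemma in the paper is the inequality $d_G(R)\lesssim |B(e,CR)|$ relating asymptotic dimension growth to volume growth; I expect this — together with the bookkeeping needed to apply Corollary 3 to the non-finitely-generated kernel $\bigoplus_{F_2}\mathbb{Z}$ — to be the only point requiring real care.
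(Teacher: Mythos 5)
Your proof is correct and follows the same overall architecture as the paper's: treat the two factors separately and combine with Theorem 4.1 (Corollary 2). The only divergences are in how each factor is handled. For $\mathbb{Z}\wr F_2$ the paper simply quotes the permanence of finite decomposition complexity under extensions and direct unions from \cite{GTY1} to conclude that $\mathbb{Z}\wr F_2$ has FDC, and then applies Proposition 1; you instead run the extension through the paper's own Corollary 3, with kernel $\bigoplus_{F_2}\mathbb{Z}$ (countable elementary amenable, hence FDC by \cite{GTY2}, hence sFDC) and quotient $F_2$ of asymptotic dimension one. Both routes are valid, and yours has the small advantage of staying internal to the paper's machinery; your remark that Corollary 3 places no finite-generation hypothesis on the kernel is accurate and is exactly the point that makes this work. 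For the Grigorchuk factor the paper cites Lemma 2.3 of \cite{DS1} for the bound of dimension growth by volume growth, whereas you reprove it via Voronoi cells and a greedy colouring; your argument is correct (the same-coloured cells are $4R$-disjoint, which is more than the $R$-disjointness required by the definition of $d_G$), so this is just a self-contained substitute for the citation. In short: same proof, with one factor routed through Corollary 3 rather than the FDC permanence theorems, and one external lemma proved rather than cited.
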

\begin{proof}
$\mathbb{Z}$ and $F_2$ both have finite asymptotic dimension, so they have finite decomposition complexity, and finite decomposition complexity is preserved by direct unions and extensions (see \cite{GTY1}). Hence $\mathbb{Z} \wr F_2$ has finite decomposition complexity, and by Proposition 1 it has subexponential decomposition growth. Also, the volume growth of a group is an upper bound on its dimension growth (\cite{DS1}, Lemma 2.3), so by Proposition 2 $G$ has subexponential dimension growth. Finally, subexponential decomposition growth is preserved by products by Theorem 4.1, so $(\mathbb{Z} \wr F_2) \times G$ has subexponential decomposition growth.
\end{proof}

\end{document}